\newtheorem{theorem}{Theorem}[section]
\newtheorem{corollary}[theorem]{Corollary}
\newtheorem{proposition}[theorem]{Proposition}
\newtheorem{definition}[theorem]{Definition}
\newtheorem{remark}[theorem]{Remark}
\numberwithin{equation}{section}
\newcommand{\R}{\mathbb{R}}
\newcommand{\B}{\mathbb{B}}
\newcommand*{\dd}{\mathop{}\!\mathrm{d}}
\newcommand{\Ell}{\mathscr{L}}
\begin{document}

\title[Rotational controls and viscosity solutions]{Rotational controls and uniqueness of constrained viscosity solutions of Hamilton-Jacobi PDE}
\author[Giovanni Colombo]{Giovanni Colombo}
\address[Giovanni Colombo]{Universit\`a\ di Padova, Dipartimento di Matematica ``Tullio Levi-Civita'', 
via Trieste 63, 35121 Padova, Italy, and Istituto Nazionale di Alta Matematica ``F. Severi'', Unit\`a\ di ricerca presso l'Universit\`a\
di Padova}
\email{colombo@math.unipd.it}
\author[Nathalie T. Khalil]{Nathalie T. Khalil}
\address[Nathalie T. Khalil]{Research Center for Systems and Technologies SYSTEC, Faculty of Engineering, University of Porto, Rua Roberto Frias s/n 4200-465 Porto, Portugal}
\email{khalil.t.nathalie@gmail.com}
\author[Franco Rampazzo]{Franco Rampazzo}
\address[Franco Rampazzo]{Universit\`a\ di Padova, Dipartimento di Matematica ``Tullio Levi-Civita'', 
via Trieste 63, 35121 Padova, Italy}
\email{rampazzo@math.unipd.it}

\date{\today}

\begin{abstract}
The classical  {\it inward pointing condition} (IPC) for a control system whose state $x$ is constrained in the closure
$C:=\bar\Omega$ of an open set $\Omega$ prescribes that at each point of the  boundary $x\in \partial \Omega$ the
intersection between the dynamics and the interior of the tangent space of $\bar \Omega$ at $x$ is nonempty. 
Under this hypothesis, for every system  trajectory $x(.)$ on  a time-interval $[0,T]$, possibly violating the constraint,  
one can  construct a  new system trajectory  $\hat x(.)$ that satisfies the constraint and whose distance from $x(.)$
is bounded by a quantity proportional to the  maximal deviation $d:=\mathrm{dist}(\Omega,x([0,T]))$.  
When (IPC)  is violated, the construction of such a constrained  trajectory is not possible in general. 
However, for a control system of the form $\dot{x}=f_1(x)u_1+f_2(x)u_2$, we prove in this paper
that a ``higher order'' inward pointing condition involving Lie brackets 
of the dynamics' vector fields allows for
a novel construction of  a constrained trajectory $\hat x(.)$ whose distance from  
the reference trajectory $x(.)$ is  bounded by a quantity proportional to $\sqrt{d}$. Our method requires 
a further assumption of non-positiveness of a sort of curvature and is based on the implementation 
of  a suitable ``rotating'' control strategy. 
As an application, we establish the continuity \textit{up to the boundary} of the value function $V$ of a 
classical state constrained optimal control problem, a property that allows to regard $V$ as the {\it unique}  
constrained viscosity solution of the corresponding Bellman equation.\\
{\sc R\'esum\'e.} La {\it condition classique de pointe intérieure} pour un système de contrôle ayant comme variable d'état $x$, contrainte à la fermeture $C:=\bar \Omega$ d'un ensemble ouvert $\Omega$, assume que pour chaque point appartenant à la frontière $x\in \partial \Omega$, l'intersection entre la dynamique et l'intérieur de l'espace tangent de $\bar \Omega$ au point $x$ n'est pas vide. Sous cette hypothèse, pour chaque trajectoire $x(.)$ sur l'intervalle de temps $[0,T]$, possiblement violant la contrainte d'état, une nouvelle trajectoire $\hat x(.)$ satisfaisant la contrainte d'état pourrait être construite, et pour cette trajectoire, la distance de $x(.)$ est bornée par une quantité proportionelle à la déviation maximale $d:= \textrm{dist}(\Omega, x([0,T]))$. Par contre, quand la condition classique de pointe intérieure n'est pas satisfaite, la construction d'une telle trajectoire n'est pas possible en général. Dans cet article, pour un système de contrôle de la forme $\dot x=f_1(x)u_1+f_2(x)u_2$, nous démontrons qu'une condition de pointe intérieure ``d'order supérieur'' qui tient en compte le crochet de Lie du champ de vecteurs de la dynamique permet une nouvelle construction d'une trajectoire $\hat x(.)$ appartenant à la contrainte d'état, et dont la distance de la trajectoire de référence $x(.)$ est bornée par une quantité proportionelle à $\sqrt{d}$. Notre méthode nécessite une hypothèse supplémentaire de courbature non positive, et est basée sur l'implémentation d'une stratégie de contrôle ``rotationel''. Comme application, on établit la continuité de la fonction valeur $V$, {\it pour des points appartenant à la frontière} de l'ensemble de contrainte, et ceci pour des problèmes de contrôle optimal classique. Sous cette continuité, la fonction valeur sera {\it l'unique} solution de viscosité de l'équation de Bellman correspondante.
\end{abstract}\keywords{Control affine systems, state constraints, higher order inward pointing condition, 
neighboring feasible trajectories, optimal control problems, value function, Hamilton-Jacobi equation.}
\subjclass[2020]{34H05,49L25}
\maketitle

\section{Introduction}
Several advances in the area of controlled dynamical systems
 \begin{equation}\label{gen_dyn}
 	\dot{x}(t)= f(x(t),u(t) ), \ u\in U,\quad  t\in [0,T]
 \end{equation}
subject to a  smooth state constraint
 \begin{equation}\label{st_constr}
 	x(t)\in C:=\{ z : h(z)\le 0\}\ \forall t\in [0,T]
 \end{equation}  ($h$ a smooth function such that $\nabla h(x)\neq 0$ $\forall x\in \partial C$) include the existence of
solutions, necessary optimality conditions for the minimizer of some objective function (that may be degenerate), and
the regularity of the corresponding value function.
For many of these issues, a crucial hypothesis is the so called {\it inward pointing condition}  (IPC), 
that assumes the existence of a constant $\gamma>0$ such that, at each point $x$ of 
the constraint's boundary, one can select a control $\bar u$ verifying
\begin{equation}\label{classical_IPC}
\nabla h(x) \cdot f(x,\bar u) \le - \gamma.
\end{equation}
This condition was introduced by Petrov in \cite{petr} in connection with the Lipschitz continuity of the minimum
time function and used by Soner in \cite{Soner_1986,Soner_1986_II}, in connection with the regularity of the value function of
a state constrained optimal control problem. In turn, such continuity is essential for the value function to be the unique 
{\it constrained viscosity solution} of the corresponding Hamilton-Jacobi-Belmann equation. 
An analogous result is obtained in \cite{Frankowska_Vinter_2000} in terms 
of a special class of lower semicontinuous solutions to the HJB equation.

Let us mention that the main fact guaranteed by assumption \eqref{classical_IPC} consists in the possibility, given a trajectory
$x$ of \eqref{gen_dyn} that violates the state constraint \eqref{st_constr}, of constructing another trajectory 
$\hat{x}$ of \eqref{gen_dyn} such that $\hat{x}(t)\in C$ for all $t\in [0,T]$ and, moreover, 
\begin{equation}\label{linW11}
\| \hat{x} - x\|_{W^{1,1}}\le K \max_{t\in [0,T]} \{\max \{h(x(t)),0\} \}
\end{equation}
for a suitable constant $K$ independent of $x$. A trajectory with the properties that are satisfied by $\hat{x}$ is usually called a
\textit {neighboring feasible trajectory} relative to $x$. 
Observe that the estimate \eqref{linW11} is \textit{linear} with respect to the
maximal deviation $d:= \max_{t\in [0,T]} \max \{h(x(t)),0\}$ of $x(.)$ from the constraint.
With the help of neighboring feasible trajectories, many applications to state constrained optimal control problems
were obtained in the literature. Besides the mentioned application to HJB equation, applications are given in 
\cite{Rampazzo_Vinter_2000, Lopes_Fontes_dePinho_2011, Arutyunov_Karamzin_2020_nondegeneracy} in deriving 
refined and unrestrictive conditions under which first order
necessary conditions (the Pontryagin Maximum Principle or the Generalized Euler-Lagrange condition) are
nondegenerate 
(i.e., the maximum principle is informative) or normal \cite{Bettiol_Khalil_Vinter_2016} (i.e., the Lagrange multiplier
associated to the cost function is not zero), characterizing value functions as the unique solution to the Hamilton-Jacobi
equation \cite{Soner_1986, Frankowska_Vinter_2000}, and establishing ``sensitivity relations'' in which the costate
trajectory and the Hamiltonian are interpreted in terms of generalized gradients of the value function 
\cite{Bettiol-Vinter2010-sensitivity analysis, Frankowska_2010_survey}. 
Soner's (IPC) was later adapted to differential inclusions with nonsmooth constraints and to problems 
depending measurably on the time variable, and sharp counterexamples were found. 
We refer the reader to
\cite{Rampazzo_Vinter_2000, Bettiol_Bressan_Vinter_2010, Bressan_Facci_2011, Bettiol_Khalil_Vinter_2016} 
and the references therein for an account of the literature.

In all the above results, given a reference process $(x,u)$, the control $\bar u$ that satisfies \eqref{classical_IPC} is
used to construct a feasible path $\hat x(.)$ that, for a time short enough, enters the interior
of the constraint. 
Then $\hat x$ is continued by using the reference control $u(.)$ and the trajectory constructed in this way is
proved to be feasible in the whole interval $[0,T]$, by possibly repeating finitely many times the above construction. 

When \eqref{classical_IPC} fails, simple examples show that it is not possible, in general, to construct
neighboring feasible trajectories, and moreover several of the above results are no longer valid.
In this paper we address precisely this situation, and construct neighboring feasible trajectories for a 
class of symmetric and control affine dynamical systems, where the classical inward pointing condition (IPC) is substituted by a
substantially weaker one. More precisely, we consider the system
\begin{equation}\label{dynamics}
\dot x = f_1(x)u_1+f_2(x)u_2, \ (u_1,u_2)\in U\subset \mathbb{R}^2,\;\text{ with }0\in\mathrm{int}\, U
\end{equation}
still with a smooth state constraint $C:=\{x: h(x)\le 0\}$. We call {\it singular set} the subset $\mathbb{S}$ of the state
space comprising the points $x$ where \eqref{classical_IPC} is violated,
namely, $\mathbb{S}:=\{x :\,\,\,\nabla h(x) \cdot f_1(x)=\nabla h(x) \cdot f_2(x)=0\}$. 
The condition which replaces $(IPC)$ simply 
states that for every $x\in\mathbb{S}$ the Lie bracket $[f_1,f_2](x)$ is not tangent
to the constraint, i.e.,
\begin{equation}\label{secord}
\nabla h(x)\cdot [f_1,f_2](x) \neq 0.
\end{equation}
To begin with, let us point out that both the classical (IPC)  and the ``second order'' condition  
\eqref{secord} are intrinsic, i.e., they are chart-invariant, so that the various issues in the quoted literature 
and in the present paper can be generalized to differential manifolds. Condition \eqref{secord}
was introduced by M. Motta in \cite{motta}, where it was proved that \eqref{secord} implies that
the value function of a discounted infinite horizon integral problem subject to the constraint $x\in C$ is continuous 
{\it  in the interior} of $C$. If the
starting point of the reference trajectory lies in the interior of the constraint,
in \cite{motta} one approximates the direction of $[f_1,f_2]$ by
standard  concatenations of the flows generated by $f_1$, $f_2$, $-f_1$, and $-f_2$. 
However, this method does not work as soon as the starting point lies on the boundary $\partial C$. 
Indeed, if both $f_1$ and $f_2$ are tangent
to the constraint's boundary, such an approach is doomed to fail, as a trajectory that follows the flow of either vector fields
for a while may immediately leave $C$.

Aiming at extending Motta's result to the boundary $\partial C$, we base our approach 
on a  \textit{continuous combination}  of $f_1$ and $f_2$ by using a control 
that rotates (a ``screwer'') with a sufficiently high frequency. The core of the proof consists in  
showing that the corresponding trajectory spirals towards the interior of $C$. 
Actually, for this to happen, in addition to the transversal bracket condition \eqref{secord}  we need to impose two more 
assumptions on the singular set $\mathbb{S}$.
The most crucial of these hypotheses  consists in the non-positiveness of a sort of curvature term expressed 
as a quadratic form $\mathbf{S}=\mathbf{S}(x): \mathbb{R}^2\to\mathbb{R}$, whose components are invariant scalars 
depending on the vector fields $f_1,f_2$, their differentials, and the first and second differential of the constraint
function $h$ (see assumption  (H5) in Section \ref{sec:ass}). 
The second condition implies that the trajectories of \eqref{dynamics} leave sufficiently quickly the singular 
set $\mathbb{S}$ (see (H6) in Section \ref{sec:ass}). 

Under the above hypotheses we are able to construct neighboring feasible trajectories that approximate a given reference path 
of the system. It must be said, however, that our neighboring property is weaker than \eqref{linW11}, in that
$\|\hat{x}-x\|_{W^{1,1}}$ is bounded above by a constant multiplied 
by the \textit{square root} of the maximal deviation $d$ of the reference trajectory from the constraint 
(while under the (IPC) this deviation grows as $d$). This is not surprising, as a time
of order $\sqrt{d}$ is needed by the spiraling trajectory in order to move sufficiently far towards the interior of $C$.
Nevertheless, this estimate is enough for some purposes. Indeed, as an application we prove a continuity result 
for the value function of a minimum problem, which in turn allows us  to characterize this function 
as the {\it unique} constrained viscosity solution of a suitable Hamilton-Jacobi-Bellman equation.

Finally, it is plausible that other applications to constrained control problems can be obtained by using our
construction of neighboring feasible trajectories, for instance for issues connected with the Maximum Principle (see e.g.\cite{Vinter_book_2010}), like normality or non-degeneracy 
of optimal trajectories of state-constrained optimal control problems. Moreover, it is expected that a generalization of our
construction to the case of transversality conditions involving Lie brackets of arbitrary order can be obtained.

The paper is organized as follows: the assumptions and the main results are collected in Section \ref{sec:ass}, 
while motivational examples and sharpness of some assumptions are discussed in Section \ref{sec:examples}. 
The construction of the neighboring feasible trajectory is performed in Section \ref{sec:proofNFT}.
In particular, the delicate estimates that are concerned with the spiraling trajectory are presented in Sections \ref{subs:screw}
and \ref{subs:construction}. Applications to the value function of a discounted infinite horizon optimal control problems,
including its characterization as the unique continuous constrained 
solution of the corresponding Hamilton-Jacobi-Bellman equation are described in Section \ref{sec:applications}.

\section{Notation and basic definitions}\label{sec:notat}
In this manuscript, $\mathbb{B}$ denotes the closed unit ball in $\mathbb{R}^n$. 
Given a subset $X$ of $\mathbb{R}^{n}$, $\partial X$ is the boundary of $X$. 
The Euclidean distance of a point $y$ from the set $X$ (that is $\inf \limits_{x \in X} \| x-y \| $) is written 
$d_X(y)$, and the 
$\delta$-neighborhood of $X$ is the open set $X_\delta:=\{ x\in \mathbb{R}^n: d_X(x)< \delta\}$, $\delta >0$.

For a given scalar function $g:\R^n \to \R$, its gradient at some point $x$ is denoted by $\nabla g(x)$. 
When $g$ is a vector-valued function, we use $Dg(x)$ for its derivative and $D^2 g(x)$ for its hessian.

The {\it Lie bracket } $[g_1,g_2]$ of two $C^1$ vectors fields $g_1,g_2: \R^n \to \R^n$ is the vector field defined as  $[g_1,g_2]:= Dg_2g_1 - Dg_1 g_2$\footnote{As is well-known the Lie bracket is an intrinsic object, i.e. it can be defined independently of coordinates}.
It satisfies the anti-symmetry relation $[g_1,g_2]\equiv - [g_2,g_1]$.

If  $T>0$ and $\alpha:[0,T]\to\R^n$ is an $L^\infty$ arc, we use $ \|\alpha\|_{L^\infty}$ to denote 
the $L^\infty$ norm. 
Furthermore, if $x$ is { absolutely continuous} \, namely  class $x\in W^{1,1}(I)$, one sets
\[
\|x\|_{W^{1,1}}:=\| x(0)\|+ \|\dot{x}\|_{L^\infty}.
\]
For the general dynamics \eqref{gen_dyn} (with $f$ smooth enough), the trajectory $x(.)$ corresponding to the control $u(.)$ such that
$x(0)=\xi$ is denoted by $x_\xi^u(.)$. The couple $(x,u)$, where $x(.)$ is the solution of \eqref{gen_dyn} corresponding to the control $u(.)$ is called a \textit{process}. A process $(x,u)$ is said \textit{feasible} (with respect to the closed set $C$)
provided it satisfies the state constraint $x(t)\in C$ for all $t\in [0,T]$.

Consider now the problem of minimizing a functional $J$ depending on trajectories of \eqref{gen_dyn}. We say that a process
$(x,u)$ is a $W^{1,1}$-minimizer for $J$ subject to \eqref{gen_dyn}
if there exists $\delta >0$ such that for all processes $(y,v)$ of \eqref{gen_dyn} with $\|y-x\|_{W^{1,1}(0,T)}<\delta$ one has $J(x)\le J(y)$.

\subsection{Quadratic forms}\label{sec:QF} We will make use of the  trivial  one-to-one correspondence between quadratic forms $\mathbf Q$ on $\R^2$   and  real $2\times 2$ symmetric matrix $Q_{\ell m}$, defined by  the validity of the relation   $\displaystyle U\mapsto{\mathbf Q}(U) = \sum_{l,m=1}^2 Q_{l,m}U^lU^m$ for all $(U_1,U_2)\in\R^2$. \footnote{ Therefore, when the context is unequivocal, we sometimes will use the term symmetric matrix and quadratic form indifferently.}
\begin{definition}We say that a quadratic form $\mathbf{Q}$ is {\rm non positive} if $\mathbf{Q}$ is indefinite,  
		--i.e. $\Delta:= Q_{11}Q_{22}-Q_{12}^2<0$,-- or  negative semidefinite, 
		--i.e. $\Delta= 0$ and $Q_{11},Q_{22}\leq 0$,-- or negative definite,  
		--i.e., $\Delta >0$ and $Q_{11} <0$ $(\iff  Q_{22} <0 )$.
	\end{definition}
If the quadratic form $\mathbf{Q}$ is indefinite, or negative semidefinite and nonvanishing, let 
	$(\cos \varphi_\mathbf{Q},\sin \varphi_\mathbf{Q})$, with $\varphi_{\mathbf Q}\in [-\pi,\pi[$, 
	be the direction of the eigenvector 
	corresponding to the negative eigenvalue of $\mathbf{Q}$ and let $\beta_{\mathbf Q}\in ]0,\frac{\pi}{2}]$ be such that
	$\big(\cos (\varphi_\mathbf{Q}-\beta_\mathbf{Q}),\sin (\varphi_\mathbf{Q}-\beta_\mathbf{Q})\big)$ and
	$\big(\cos (\varphi_\mathbf{Q}+\beta_\mathbf{Q}),\sin (\varphi_\mathbf{Q}+\beta_\mathbf{Q})\big)$ are 
	the two directions along with $\mathbf{Q}$ vanishes. Observe that the limit case 
	$\beta_{\mathbf{Q}}=\frac{\pi}{2}$ corresponds to $\mathbf{Q}$ negative semidefinite. Moreover,
	for each $U\neq (0,0)$ it holds ${\mathbf Q}(U)<0$ if and only if the argument $\vartheta$ of $U$ (or of $-U$) belongs
	to the open interval $]\varphi_{\mathbf Q}-\beta_{\mathbf Q},\varphi_{\mathbf Q}+\beta_{\mathbf Q}[$.
	Given the quadratic form ${\mathbf Q}$ as above,
	\begin{equation}\label{formaQ}
		\text{we call $\varphi_\mathbf{Q}$ and $\beta_\mathbf{Q}$ the {\it principal  direction} and 
			{\it half-amplitude} of $\mathbf{Q}$.}
\end{equation}
Let $f_1,f_2:\mathbb{R}^n\to \mathbb{R}^n$ be differentiable and let $h:\mathbb{R}^n\to \mathbb{R}$ be twice
	differentiable.
	For every $x\in \R^2$, the $2\times 2$ matrix  ${\Lambda}(x) = 	\Big(\Lambda_{\ell m}(x)\Big)_{\ell,m=1,2}$, defined by
		\begin{equation} \label{equation: S+A italy}
		\Lambda_{\ell m}(x) := f_\ell (x) \cdot D^2 h(x) f_m(x) + \nabla h(x) \cdot  Df_\ell(x) f_m (x) \qquad \forall \ell,m=1,2,
	\end{equation}
	will be used throughout the paper.
	We will consider, in particular, the {\it symmetric part $S(x)$}   and the {\it antisymmetric part  $A(x)$}  of ${\Lambda}(x)$ defined by 
	\begin{align}
		S_{\ell m}(x) := \frac 12 (\Lambda_{\ell m}+\Lambda_{m \ell})&=\frac{1}{2} \nabla h(x) \cdot \Big( Df_\ell(x) f_m (x)+Df_m(x) f_\ell (x)\Big)+ 
		f_\ell (x) \cdot D^2 h(x) f_m(x) \label{def_Slm}, \\
		A_{\ell m}(x)  := \frac 12 (\Lambda_{\ell m}-\Lambda_{m \ell})&=  \frac{1}{2} \nabla h(x) \cdot \Big( Df_\ell(x) f_m (x)-Df_m(x) f_\ell (x)\Big) = \frac{1}{2}\nabla h(x) 
		\cdot [f_m,f_\ell] (x).\label{def_Alm}
\end{align}
The following result  states  that  each entry of  the matrix ${ S}(x)$ is independent of the choice of coordinates $x$, in other words each of such entries is an invariant.  This suggests that the main result of the paper might be extended to the case where the state $x$ belongs to a differential manifold.  
	\begin{proposition}\label{inv_S}
		The matrix ${S}(x)$ is independent of local coordinate changes. Namely for any local diffeomorphism
		$\tilde{x}=\tilde{x}(x)$ defined on some open subset $O\subseteq\R^n$ one has
		\[
		\tilde{S}(\tilde{x}(x)) = {S}(x)\;\text{ for each $x\in O$,}
		\]	where $\tilde{S}$ is constructed as $S$ but in the coordinates $\tilde{x}$.
	\end{proposition}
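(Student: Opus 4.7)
The plan is to recognize the matrix $\Lambda(x)$ of \eqref{equation: S+A italy} as the iterated Lie derivative of the scalar function $h$ along the vector fields $f_\ell,f_m$, namely
\[
\Lambda_{\ell m}(x) \;=\; f_m\bigl(f_\ell(h)\bigr)(x),
\]
where each $f_i$ is viewed as a first-order differential operator on smooth functions. Since the action of a vector field on a scalar function is intrinsic and produces another scalar function, iterating twice yields an intrinsic scalar at each point $x$. The invariance of $\Lambda_{\ell m}(x)$ then immediately implies the invariance of its symmetric part $S_{\ell m}(x)=\frac12(\Lambda_{\ell m}+\Lambda_{m\ell})$.

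First I would verify the identity $\Lambda_{\ell m}(x) = f_m(f_\ell(h))(x)$ by a direct chain-rule calculation. The function $\psi_\ell(x) := \nabla h(x)\cdot f_\ell(x)$ is smooth, and differentiating it in the direction $f_m$ using the product rule together with the symmetry of $D^2h$ yields
\[
f_m(\psi_\ell)(x) \;=\; \nabla h(x)\cdot Df_\ell(x)f_m(x) + f_\ell(x)\cdot D^2h(x)f_m(x),
\]
which is exactly $\Lambda_{\ell m}(x)$ as defined in \eqref{equation: S+A italy}. As a consistency check, one has $f_m(f_\ell h)(x)-f_\ell(f_m h)(x)=[f_m,f_\ell](h)(x)=\nabla h(x)\cdot[f_m,f_\ell](x)=2A_{\ell m}(x)$, which agrees with \eqref{def_Alm} under the paper's convention $[g_1,g_2]=Dg_2 g_1-Dg_1 g_2$.

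Next, for a local diffeomorphism $\tilde x=\tilde x(x)$ on $O$, let $\tilde h(\tilde x):=h(x(\tilde x))$ be the pulled-back function and $\tilde f_\ell(\tilde x):=D\tilde x(x)f_\ell(x)$ the pushed-forward vector fields. The standard chain rule computation gives $\tilde f_\ell(\tilde h)(\tilde x(x))=f_\ell(h)(x)$ for every $x\in O$, which is the coordinate-free statement that a vector field acts intrinsically on a scalar function. Iterating, one obtains $\tilde f_m(\tilde f_\ell(\tilde h))(\tilde x(x))=f_m(f_\ell(h))(x)$, hence $\tilde\Lambda_{\ell m}(\tilde x(x))=\Lambda_{\ell m}(x)$, and symmetrizing in $\ell,m$ delivers $\tilde S(\tilde x(x))=S(x)$, which is the desired conclusion.

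I do not foresee any real obstacle. The only point requiring care is the sign/ordering convention, namely that $\Lambda_{\ell m}=f_m(f_\ell h)$ and not $f_\ell(f_m h)$; once that is pinned down the statement follows with no further work. The argument incidentally shows that the entire matrix $\Lambda$ is invariant (so $A$ is as well, a fact already visible from its expression $\frac12\nabla h\cdot[f_m,f_\ell]$ in \eqref{def_Alm}), but the proposition singles out $S$ because it is the symmetric part that will subsequently play the role of an intrinsic quadratic form in the curvature-type hypothesis~(H5).
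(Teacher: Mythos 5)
Your proof is correct, and it takes a recognizably different route from the paper's. You identify $\Lambda_{\ell m}=f_m\bigl(f_\ell(h)\bigr)$ as an iterated Lie derivative of the scalar $h$ (the chain-rule verification you sketch does reproduce \eqref{equation: S+A italy}, and your sign check against \eqref{def_Alm} under the convention $[g_1,g_2]=Dg_2g_1-Dg_1g_2$ is right), and then invariance of the whole matrix $\Lambda$ --- hence of $S$ and, incidentally, of $A$ --- follows at once from the intrinsic nature of a vector field acting on a function. The paper instead fixes $w\in\R^2$, observes that $\Phi_w(y)=\sum_i w^i\,\nabla h(y)\cdot f_i(y)$ is an invariant scalar (the pairing of $\dd h$ with $\sum_i w^if_i$), differentiates it at $t=0$ along the solution of the control system $\dot x=\sum_i f_i(x)u^i$ with $u(0)=w$, computes this one-sided derivative to be exactly $\sum_{i,j}S_{ij}(x_0)w^iw^j$, and concludes by arbitrariness of $w$ together with symmetry of $S$ (polarization). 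The underlying chain-rule computation is the same, but the scaffolding differs: your argument is shorter, needs no auxiliary trajectory and no polarization step, and yields the slightly stronger conclusion that all of $\Lambda$ is chart-invariant; the paper's argument, by contrast, only captures the symmetric part (the antisymmetric part cancels in the quadratic form) but buys a dynamical interpretation of $\mathbf S(x_0)(w)$ as the initial rate of change of $t\mapsto\frac{\dd}{\dd t}h(x(t))$ along system trajectories issuing with control value $w$, which is precisely how $\mathbf S$ enters the second-order expansions used later in the paper. The only hygiene point in your writeup is regularity: to iterate you need $h\in C^2$, $f_\ell\in C^1$ and a $C^2$ change of coordinates, all of which are available under \textbf{(H1)}, \textbf{(H3)}.
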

The proof of the above result is postponed to the Appendix.
\subsection{Drops}
	\begin{definition}	Given a locally absolutely continuous  curve $R=(R_1,R_2):\mathbb{R}\to \mathbb{R}^2 $ 
and  $t_1,t_2\in\mathbb{R},$ $ t_1<t_2$, motivated by Gauss-Green formula, let us define
$\mathbf{Area}(R_1,R_2)|_{t_1}^{t_2}$, the {\rm area swept} by $R$ on 
$[t_1,t_2]$,  by means of the formula: 
	\[
	\mathbf{Area}(R)|_{t_1}^{t_2}:=\frac12 \int_{t_1}^{t_2} \left[\frac{\dd R_2}{\dd s}(s) R_1(s) - \frac{\dd R_1}{\dd s}(s) R_2(s)\right] \dd s.
	\]
Moreover, let us define $\mathbf{Exc}(R_1,R_2)|_{t_1}^{t_2}$,  the {\rm excess} swept by $(R_1,R_2)$ on $[t_1,t_2]$,
by setting
	\[
	\mathbf{Exc}(R)|_{t_1}^{t_2}:= 
	\sum_{\ell,m=1,2}\int_{t_1}^{t_2}\!\!\!\!\int_0^s \bigg|\xi\frac{\dd R_\ell(\xi)}{\dd \xi}\dd\xi\bigg|\, 
	\bigg|\frac{\dd R_m(s)}{\dd s}\bigg|\dd s.
	\]
\end{definition}
\begin{remark}\label{rempar}
	{\rm Let us observe that both the area and the excess are intrinsic quantities, i.e., they are independent of the curve's parameterization.}
\end{remark}

\begin{definition}\label{drop}	
We call {\rm drop of phase} $\varphi\in [-\frac{\pi}{2},\frac{\pi}{2})$, {\rm half-amplitude} 
$\beta\in (0,\frac{\pi}{2}]$, and {\rm period} $\tau_0 >0$ any simple curve
		\(R=(R_1,R_2):\mathbb{R}\to \mathbb{R}^2 \) verifying the following properties:	
		\begin{enumerate}
			\item $R$ is parameterized with arc-length, i.e., $\left|\frac{\dd R}{\dd t}(t)\right|=1$ for almost every $t\in\mathbb{R}$;
			\item $R(0)=0$ and $\frac{\dd R}{\dd t}(0)= (\cos(\varphi-\beta),\sin (\varphi-\beta) )$;
			\item $R$ is $\tau_0$-periodic and $\frac{dR}{dt}$ is  $L$-Lipschitz continuous, for some $L>0$;
			\item the  area $\sigma\mapsto \mathbf{Area}(R)_{0}^{\sigma}$ is strictly increasing and there exists a positive constant  $C_R$ such that 
			\[
			\mathbf{Exc}(R)_0^{\sigma} \leq C_R\big|\mathbf{Area}(R)_0^{\sigma}\big|,\quad\forall \sigma\in[0, \tau_0];\]
			\item one has \[R(]0,\tau_0[) \subset 
 \{ U\in\mathbb{R}^2 : \varphi - \beta < \mathrm{Arg}(U) < \varphi + \beta\}
\]
(here $ \mathrm{Arg}(U)$ is the unique angle in $]-\pi,\pi]$ such that $U=|U|e^{i\, \mathrm{Arg}(U)}$, $U\neq 0$).
		\end{enumerate}	
When $\beta<\pi/2$, we say that the the drop $R$  is {\rm pointed}. Instead,  if a drop  
$R$ has  half-amplitude $\beta=\pi/2$,  we say that  $R$ is  {\rm circle-like}.
\end{definition} 
Observe that simple estimates for both $\mathbf{Exc}(R)_0^{t}$ and $\mathbf{Area}(R)_0^{t}$ can be obtained
immediately from the Lipschitz continuity of $\dot{R}$. Indeed,
\begin{equation}   \label{rough_est_exc}
\mathbf{Exc}(R)_0^{t} \le \frac23 L^2 t^3.
\end{equation}
Moreover, since, for $i=1,2$ we can write $R_i(t)=\dot{R}_i(0)t+\int_0^t\!\!\int_0^s\ddot{R}_i(\xi)\dd\xi\dd s$, it holds
\begin{equation}\label{rough_est_area}
\begin{split}
\mathbf{Area}(R)_0^{t}&=\frac12 \int_0^t\bigg(-\Big( \dot{R}_2(0)s+\int_0^s\int_0^\sigma \ddot{R}_2(\xi)\dd\xi\Big)
   \Big(\dot{R}_1(0)+\int_0^s\ddot{R}_1(\xi)\dd\xi\Big) \\
   &\qquad \qquad +\Big( \dot{R}_1(0)s+\int_0^s\int_0^\sigma \ddot{R}_1(\xi)\dd\xi\Big)
   \Big(\dot{R}_2(0)+\int_0^s\ddot{R}_2(\xi)\dd\xi\Big)\bigg)\dd s\\
   &= O(t^3)\quad\text{for }t\to0^+.
\end{split}
\end{equation}
\begin{definition} If for some drop $R$, the curve $r:\mathbb{R}\to\mathbb{R}^2 $ is defined as 
$r(t) = \frac{dR}{dt}(t)$ for almost all $t\in\mathbb{R}$, we say that $r$ is a {\rm drop-generator} and 
we call $R$ the {\rm primitive drop} of $r$. A drop generator $r$ will be said to have {\rm phase} $\varphi$ and
{\rm half-amplitude} $\beta$ if its primitive drop $R$ has phase $\varphi$ and half-amplitude $\beta$.  
Similarly, we say that a   drop generator $ r$ is  {\rm pointed}  
[resp. {\rm circle-like}], if and only if its primitive $R$ is  
{\rm pointed} [resp. {\rm circle-like}].\end{definition} 
Observe that for any $\tau_0$-periodic drop generator $r$ the condition $\int_0^{\tau_0} r(s)\dd s =0$ holds.

\medskip

Finally, we will be interested in the sign of quadratic forms on the image of drops.

\begin{definition} We say that a drop $R$ (and its generator) of phase $\varphi$ and  half-amplitude  $\beta$ is
{\rm adapted to a non-positive symmetric quadratic form $\mathbf{S}$} if 
\[
\varphi=\pm\varphi_{\mathbf{S}} \quad \text{and}\quad \beta\le\beta_{\mathbf{S}}.
\]
\end{definition}
\begin{remark}{\rm Notice, in particular, that if a drop $R$ is adapted to a non-positive symmetric quadratic form $\mathbf{S}$,
then 
\begin{equation}\label{S<0}
\mathbf{S}(R(t)) \leq 0 \qquad\forall t\in\mathbb{R}.
\end{equation}
Let us observe that if $R$ is circle-like and is adapted to $\mathbf{S}$, then necessarily the quadratic form $\mathbf{S}$ is 
negative semidefinite. An example of an adapted pointed drop is depicted in Figure \ref{fig:ad_drop}.}
\end{remark}
\begin{figure}
\centering
\includegraphics[width=7.2truecm]{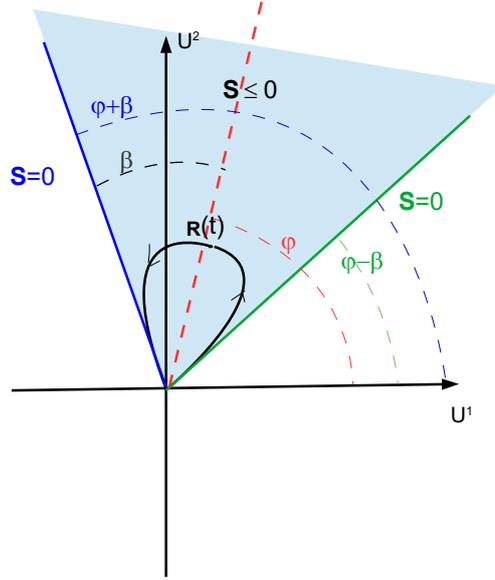}
\caption{An adapted drop}
\label{fig:ad_drop}
\end{figure}
\subsection{Examples of drops adapted to non-positive quadratic forms} \label{constr_drops}
\subsubsection{A circle-like drop generator} 
An obvious example of circle-like drop generator of period $\tau_0 >0$ is the circle
$t\mapsto r(t):=\left(\begin{matrix}\cos (2\pi t/\tau_0-\pi/2)\\\sin(2\pi t/\tau_0 -\pi/2)\end{matrix}
\right)$. 
Clearly, $r$ has phase $0$ (and half-amplitude $\pi/2$), while a rotation of $r$ by an angle $\varphi \in ]-\pi/2,\pi/2]$ 
yields any desired phase. Observe that, for $t\in [0,\tau_0]$, the area
generated by $R(t)=\int_0^t r(s)\dd s$ is
\[
\mathbf{Area}(R)_0^{t} = \tau_0\, \frac{2 \pi t - \tau_0 \sin\frac{2 \pi t}{\tau_0}}{8 \pi^2}  \ge \frac{1}{2\pi^2} \, \frac{t^3}{\tau_0},
\]
since $\frac{2\pi t}{\tau_0}-\sin \frac{2\pi t}{\tau_0}\ge 4\frac{t^3}{\tau_0^3}$ for $t\in [0,\tau_0]$.
Therefore, taking into account \eqref{rough_est_exc}, in order to obtain (4) of Definition \ref{drop} it is enough
to take $\tau_0$ small enough. All other requirements of Definition \ref{drop} are fulfilled. Similar computations can be
performed for ellipses of half amplitudes $A,B>0$, by using arclength.

\subsubsection{A pointed drop}\label{pted_drop}
 Let $\beta\in (0,\frac{\pi}{2}]$ and $P>0$ be given and consider the circular arc
\[
R(t)= \begin{pmatrix}R_1(t)\\
R_2(t)
\end{pmatrix}
:= \begin{pmatrix}\frac{P}{\pi} \cos \big(\frac{\pi t}{P}-\beta\big) \sin\frac{\pi t}{P}\\
\frac{P}{\pi} \sin \big(\frac{\pi t}{P}-\beta\big) \sin\frac{\pi t}{P}
\end{pmatrix}
=\begin{pmatrix}
\int_0^t \cos\big(\frac{2\pi s}{P}-\beta\big)\dd s\\
\int_0^t \sin\big(\frac{2\pi s}{P}-\beta\big)\dd s
\end{pmatrix}.
\]
Setting $P_\beta:=\displaystyle\frac{P\beta}{\pi}$, we observe that 
$R(P_\beta)=\Big(P\displaystyle{\frac{\sin \beta}{\pi}},0\Big)$. 
Moreover, $R$ has phase $0$ and half amplitude $\beta$. 
For $P_\beta\le t \le 2 P_\beta$, let $R$ be the reflection of the above curve with respect to the $x$-axis, 
traveled backwards, i.e., for $t\in [P_\beta, 2P_\beta]$, $R(t):=\big(R_1(t-P_\beta),-R_2(t-P_\beta)\big)$. 
Finally we extend $R$ on the whole of $\R$ by
$2P_\beta$-periodicity. Observe now that, for $0\le t \le P_\beta$ the area spanned by $R$ behaves
exactly as in the previous example, so that for $P$ small enough estimate (4) of Definition \ref{drop} holds.
For $P_\beta\le t \le 2 P_\beta$, by symmetry, we have, for $P$ small enough,
\begin{align*}
\mathbf{Area}(R)_0^t &=\mathbf{Area}(R)_0^{P_\beta}+\mathbf{Area}(R)_{P_\beta}^t\ge
\shortintertext{(arguing as in the circle-like case)}
&\ge \frac{1}{2\pi^2} \bigg( \frac{(t-P_\beta)^3}{P^2}+\frac{P_\beta^3}{P^2} \bigg)\\
&\ge \frac{1}{8\pi^2}\frac{t^3}{P^2},
\end{align*}
so that (4) of Definition \ref{drop} remains valid. A rotation of
$R$ by an angle $\varphi$ yields a curve of phase $\varphi$ and half amplitude $\beta$. 
Finally, observe that $R$ has period $2P_\beta=2\frac{P\beta}{\pi}$. Therefore,
by choosing $P=\frac{\pi}{2\beta}\tau_0$ a curve $R$ with preassigned period $\tau_0$ is constructed. 
Observe that
the choice of $P$ is proportional to $\tau_0$.
The case $\varphi=0$, $\beta=\frac{\pi}{4}$ is depicted in Figure \ref{fig:pted_drop}.
\begin{figure}
\centering
\includegraphics[width=8.5truecm]{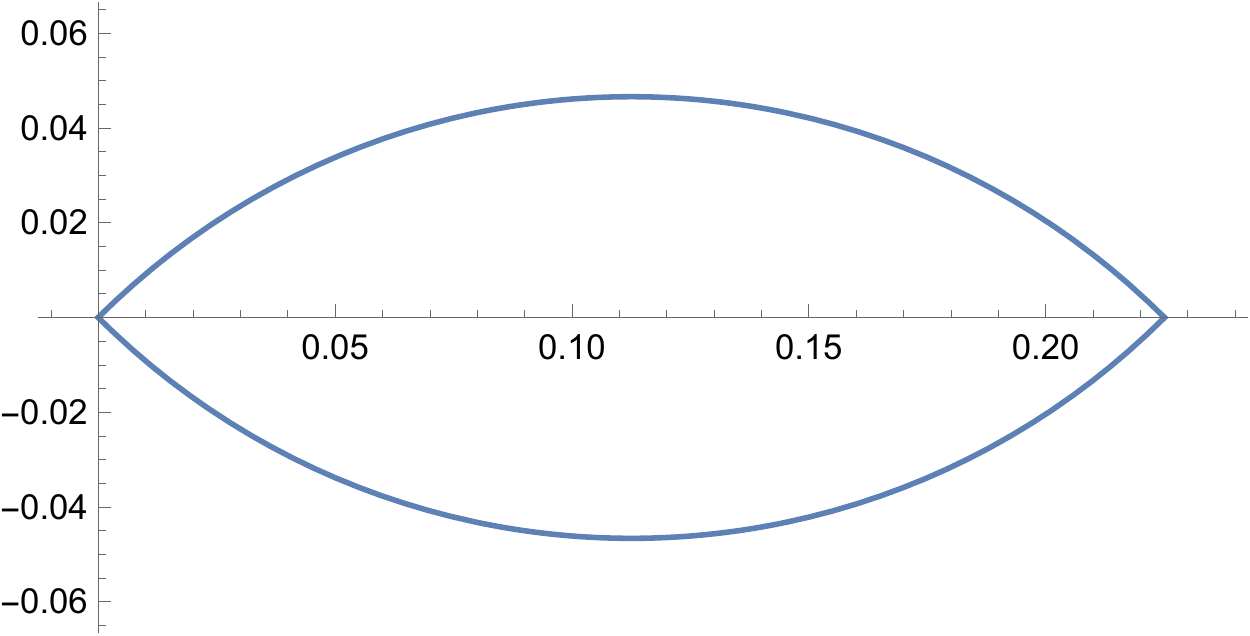}
\caption{The pointed drop constructed in Sect. \ref{pted_drop}}
\label{fig:pted_drop}
\end{figure} 
 	
\subsection{Rotational controls}	\label{sec:rot_contr}

For some $\tau_0 ,L>0$, let $r$ be a $\tau_0$-periodic, $L$-Lipschitz continuous drop generator, and, for any 
{\it angular velocity} $\omega \in\mathbb{R}$, let us  consider the control 
		\begin{equation}\label{eq:rot_controls}
		u_\omega(t) := r(\omega t),\quad \forall  t\in\mathbb{R},
	\end{equation}	
	which will be called a {\it rotational control of base $r$ and angular velocity $\omega$}.
	The trajectory verifying (\ref{control system}) that corresponds to  the control $u$ will be termed 
	$y_{\omega,x_0}
	$. (The dependence on  $\omega$ will be often omitted and we shall write $u$ and $y_{x_0}$ (or even $y$) instead of $	u_{\omega}$ and
	$y_{\omega,x_0}$, respectively). 
	
	Let us  consider the {\it first iterated integrals }
	\begin{align}
		t\mapsto(U^1_\omega,U^2_\omega)(t) :=\int_0^t\left( u^1_\omega , u^2_\omega \right)(s)  \dd s. \end{align} 
	Notice that, for every $\omega \in\mathbb{R}$ , {\it $(U^1_\omega,U^2_\omega)$
		is $\tau_0/\omega$-periodic}, by definition of drop generator.	
	Furthermore,
	\[|(U^1_\omega,U^2_\omega)(t)|\leq \frac{L\tau_0}{\omega}, \,\,\,\,\forall t\in\mathbb{R}.\]
	With a change of variable we also get
	\begin{equation}\label{wt}
		\left(	U^1_{\rho\omega},U^2_{\rho\omega}\right)(t) = \frac{1}{\rho}	\left( U^1_{\omega}(\rho t), U^2_{\omega}(\rho t)\right) \qquad\forall  t,\omega,\rho\in\mathbb{R},\,\,\rho\neq 0.\end{equation} 
For every $\ell,m=1,2$, let us also consider the {\it second  iterated integrals} 	
\[
	U^{\ell m}_\omega(t) := \int_0^t u^\ell_\omega(s) U^m_\omega(s)\dd s,
\]
together with their {\it symmetric} and {\it antisymmetric part}, namely
\[
U_{S,\omega}^{\ell m}(t):=\frac 12(U^{\ell m}_\omega(t) +U^{m\ell}_\omega(t))
\]
and  
\[U_{A,\omega}^{\ell m}(t):=\frac12 (U_\omega^{\ell m}(t) - U_\omega^{m\ell}(t)),
\]
	respectively. 

One easily checks that, for all $t\in\mathbb{R}$ and $\ell,m=1,2$, 
	\begin{equation}\label{U ell m}
		U_{S,\omega}^{\ell m}(t)= \frac 12 U_\omega^\ell(t)U_\omega^m(t), \end{equation} and, furthermore,	
	\[ U_{A,\omega}^{11}(t)= U_{A,\omega}^{22}(t)\equiv 0 , \qquad U_{A,\omega}^{21}(t)=-U_{A,\omega}^{12}(t)=\mathbf{Area}(U^1_\omega,U^2_\omega)|_0^t.
	\]
Notice that, by \eqref{wt} and \eqref{U ell m}, one  has
	\[U_{S,\omega}^{\ell m}(t) = \frac{1}{\omega^2}R^{\ell m }_S(\omega t)\qquad \forall \ell,m =1,2,\,\,\,\forall t,\omega\in \mathbb{R}, \,\,\
	\]
and
\begin{equation}\label{area om}
U_{A,\omega}^{21}(t)=\mathbf{Area}(U^1_\omega,U^2_\omega)|_0^t=\frac{1}{\omega^2}\mathbf{Area}(R^1,R^2)|_0^{\omega t},
\end{equation}
where $R=(R^1,R^2)$ is the primitive drop of $r=(r^1,r^2)$ and the $R^{\ell m}$, $\ell,m=1,2$,
are defined with respect to $r$ analogously to $U^{\ell m}_\omega$,  with respect to $u_\omega$, namely
$R^{\ell m}(t):=\int_0^t r^\ell(s) R^m(s)\dd s$.

\section{Assumptions and statement of the main results}\label{sec:ass}
Let  $f_1, f_2: \R^n \to \R^n$ be vector fields, let $h: \R^n \to \R$ be a continuous function, 
and consider the closed subset 
\begin{equation} \label{state constraint} C := \{ x : h(x) \le 0  \} .  \end{equation}  
We will be concerned with  the nonlinear control system 
\begin{equation} \label{control system}  \dot{x}  = u_1 f_1(x) + u_2 f_2(x), \quad x(0)=x_0, \end{equation}
subject to the state-space constraint 
\begin{equation}\label{state constraint condition}
x(t)\in C,\qquad t\in [0,T].
\end{equation}
Here, $x$ and $u=(u_1,u_2)$ are the state and the control, respectively, and $x_0$ is the initial condition. 
A trajectory is termed {\it feasible} if it satisfies both (\ref{control system}) and (\ref{state constraint condition}).

\medskip

The data enjoy the following basic assumptions, together with further ones that will be introduced later:
\begin{itemize}
	\item[\textbf{(H1)}] $f_1, \ f_2$ are of class $C^{2}$, Lispchitz with common Lipschitz constant $k_f$, and for all $x\in \R^n$, $|f_1(x)| + |f_2(x)| \le k_0$ for some $k_0>0$.
	\item[\textbf{(H2)}] $(u_1, u_2) \in \mathbb{B} := \{ (u_1,u_2) \in \R^2 \ : \  u_1^2+ u_2^2 \le 1\}$.
	\item[\textbf{(H3)}]  $h$ is of class $C^{2,1}$ and is Lipschitz with constant $k_h$,
$C$ is nonempty,  and $\nabla h(x) \neq 0$ for all $x$ such that $h(x)=0$, so that, in particular, $C$ has nonempty interior.
\end{itemize}
We define the \textit{singular set} $\mathbb{S}$ as the set where the classical first order 
Inward Pointing Condition \eqref{classical_IPC} is violated:
\begin{equation}\label{eq:defS}
\mathbb{S} := \{ x \in C \ : \  \nabla h(x) \cdot f_1(x) = \nabla h(x) \cdot f_2(x) = 0  \}.
\footnote{Observe that $\mathbb{S}$ must have empty interior (see, e.g., \cite[Remark 2.2]{motta}).}
\end{equation}
We will make the following assumptions, that involve $\mathbb{S}$:
\begin{itemize}
	\item[\textbf{(H4)}] there exists $\alpha > 0$ such that, for all $x\in\mathbb{S}$, 
$\big|\nabla h(x) \cdot [f_1,f_2](x)\big| > \alpha$.
	\item[\textbf{(H5)}]  For any $x\in\mathbb{S}$, recalling the quadratic form $\mathbf{S}(x)$ that was defined in Sect. \ref{sec:QF},
one of the following properties is verified:
\begin{itemize} \item[\textbf{(H5)(i)}] the  form $\mathbf{S}(x)$ is negative semidefinite; 
\item[\textbf{(H5)(ii)}]  the form $\mathbf{S}(x)$ is indefinite. 
\end{itemize}
\item[\textbf{(H6)}] there exist $\delta> 0$ and $d_0>0$ such that
	\begin{equation}\label{ass_H6}
	 \sqrt{(\nabla h(x) \cdot f_1(x))^2 +  (\nabla h(x) \cdot f_2(x))^2} \ge d_0 \,d_{\mathbb{S}}(x) \; \text{  for all } x \in \mathbb{S} + \delta \B. 
	 \end{equation}
\end{itemize}
\begin{remark}{\rm
Notice that, since the control set $\mathbb{B}$ is the unit ball of $\R^2$, \textbf{(H6)} is equivalent to 
\begin{itemize}
\item[\textbf{(H6)'}] there exist $\delta> 0$ and $d_0>0$ such that
\[\min_{(u_1,u_2)\in U}\nabla h(x) \cdot\left( f_1(x)u_1 +   f_2(x)u_2\right) \leq - d_0 \,d_{\mathbb{S}}(x) \; \text{  for all } x \in \mathbb{S} + \delta \B.\]
\end{itemize}}
\end{remark}
\begin{remark} {\rm Without loss of generality, the right hand side of \eqref{control system} 
can be supposed to be fully nonlinear, i.e., 
equal to $f(x,u)$, out of a neighborhood of the singular set $\mathbb{S}$, while in a neighborhood of $\mathbb{S}$
the right hand side of \eqref{control system} can be supposed to be of the form
\[
\sum_{i=1}^m f_i(x) u_i,
\]
with $m\ge 2$ and such that two among the vector fields $f_i$ satisfy the previous assumptions.
Moreover the control set $\mathbb B$ can be supposed to be any
set in $\mathbb{R}^m$ that contains the origin in its interior.}
\end{remark}

\medskip

The key result of the paper involves a construction of neighboring feasible trajectories together with an estimate on the distance from the reference one. Before stating the result, we introduce some notation that will be used along this section. Let 
$(\bar x(.), \bar u(.))$ be a reference process of (\ref{control system}) such that $h(x_0)=0$ (where $h$ is as in (\ref{state constraint})), that possibly does not satisfy the constraint $\bar{x}(t)\in C$ for all $t$ in a given time interval $[0,T]$. 
Let $d$ be the maximum extent of the violation of the state constraint condition of $(\bar x(.),\bar u(.))$ defined on $[0,T]$:
\begin{equation}\label{eq: expression of the violating extent} 
d:= \max\limits_{t\in [0,T]} \{ \max \{ h(\bar x(t)), 0 \}   \}  .  
\end{equation}
The first time after which the reference trajectory violates the state constraint will be denoted by $\tau_1$, which is defined as follows:
\begin{equation}\label{eq: expression of the violating time}
\tau_1 := \begin{cases}  T \quad & \text{if } d=0 \\ \inf \{ t \in (0,T] \ : \ h(\bar x(t)) > 0 \} \quad  & \text{if } d>0 .
\end{cases}
\end{equation}
We can now state our result on neighboring feasible trajectories.
\begin{theorem}\label{theorem:NFT}
	Assume \textbf{(H1)}-\textbf{(H6)} and let $(\bar x, \bar u)$ be a reference process on the interval $[0,T]$,
with initial condition ${x}_0$ such that $h(x_0)\le 0$. 
Then for all $d$ small enough there exists another process $(y(.),u(.))$ of \eqref{control system} 
that enjoys the following properties:
	\begin{align}\label{estimate: on first interval}
	 y(t)&=\bar{x}(t) \text{ for all } t\in [0,\tau_1] ,\\
	\label{estimate: on second interval}
	y(t)\in \overset{\circ}{C},  \,\,\hbox{\text i.e.,} \,\,  h(y(t))& < 0 \text{ for all } t\in (\tau_1,T] ,\\
\label{eq:nonlinest} 
 	    \big\| y(.)-\bar x(.)\big\|_{L^\infty(0,T)}  &\leq K\sqrt{d},\\
\label{contrest}
\| u(.) - \bar u(.)\|_{L^1(0,T)}&\le K \Big(\sqrt{d} + \big\| \bar u(.-\tau(d))-\bar{u}(.)\big\|_{L^1(\tau_1+\tau (d),T)}\Big),\\
\label{W11est}
\big\| \dot{y}(.)-\dot{\bar x}(.)\big\|_{L^1(0,T)}&\le K\Big(\sqrt{d} + 
\big\| \dot{\bar x}(.-\tau(d))-\dot{\bar{x}}(.)\big\|_{L^1(\tau_1+\tau (d),T)}\Big),
\end{align}
for a suitable constant $K$ depending only on $h,f_1,f_2, T$ and a suitable nonnegative function  
$\tau(\cdot)$ verifying $\tau(d)\le K\sqrt{d}$. Therefore, in particular,
\[
\big\| \dot{y}(.)-\dot{\bar x}(.)\big\|_{L^1(0,T)}\to 0\;\text{ as $d\to 0$}
\]
and
\[
\big\| y(.)-\bar{x}(.)\big\|_{W^{1,1}(0,T)}\le K'\sqrt{d},
\]
for a suitable constant $K'$ depending only on $h,f_1,f_2, T$, 
provided $\dot{\bar x}$ has bounded variation (that holds, in particular, if $\bar u$ has bounded variation).
\end{theorem}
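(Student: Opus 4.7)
The plan is to build $y$ by concatenation: keep $y=\bar x$ up to the first violation time $\tau_1$, then deploy a carefully chosen rotational control on a short interval $[\tau_1,\tau_1+\tau(d)]$ to spiral strictly into $\overset{\circ}{C}$, and finally continue with a time-shifted copy of $\bar u$ on $[\tau_1+\tau(d),T]$, iterating the middle step if the reference re-enters $\mathbb{S}$ and violates the constraint again. The outcome should be a trajectory that matches $\bar x$ up to $\tau_1$, that is feasible afterwards, and whose $L^\infty$- and $W^{1,1}$-distance from $\bar x$ is controlled by $\sqrt d$ plus the modulus of $L^1$-shift of $\bar u$ or $\dot{\bar x}$, as stated.

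First, at $t=\tau_1$ we have $h(\bar x(\tau_1))=0$. If $\bar x(\tau_1)\notin\mathbb{S}$, classical (IPC) arguments (a constant admissible control $\bar u$ with $\nabla h\cdot(f_1\bar u_1+f_2\bar u_2)\le-\gamma<0$) produce a feasible correction with linear-in-$d$ estimates, which are stronger than what is claimed. The substantial case is $\bar x(\tau_1)\in\mathbb{S}$: here I would pick a drop generator $r$, $\tau_0$-periodic and adapted to $\mathbf S(\bar x(\tau_1))$ (the construction from Sect.~\ref{constr_drops} provides both the circle-like and the pointed variants, matching the two cases of (H5)), and choose its orientation (sign of $\varphi$) so that its interaction with the nonzero antisymmetric part $A$ (available by (H4)) produces a negative contribution to $h$. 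Then set $u(t)=u_\omega(t-\tau_1)=r(\omega(t-\tau_1))$ on $[\tau_1,\tau_1+\tau(d)]$ for a high angular velocity $\omega=\omega(d)$ and a short horizon $\tau(d)\sim\sqrt d$ to be tuned.

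The core analytical step is the Taylor-type expansion of $h(y(t))$ along this rotational control. Writing $y(t)=\bar x(\tau_1)+\int_{\tau_1}^t(u_1 f_1+u_2 f_2)(y)\,ds$ and expanding to second order, one gets
\begin{equation*}
h(y(t)) = h(\bar x(\tau_1))+\sum_\ell (\nabla h\cdot f_\ell)\,U^\ell_\omega(t-\tau_1)+\sum_{\ell,m}\Lambda_{\ell m}\,U^{\ell m}_\omega(t-\tau_1)+\text{(remainder)},
\end{equation*}
where the first sum vanishes at points of $\mathbb{S}$. Splitting $\Lambda=S+A$, the symmetric block pairs with $U^{\ell m}_{S,\omega}=\tfrac12 U^\ell_\omega U^m_\omega$ producing $\mathbf S(U_\omega)\le 0$ by the adaptedness of the drop and (H5), while the antisymmetric block pairs with the area $U^{21}_{A,\omega}=\omega^{-2}\mathbf{Area}(R)|_0^{\omega(t-\tau_1)}$ and, thanks to (H4) and the sign choice, contributes $\le -c\,\omega^{-2}\mathbf{Area}(R)|_0^{\omega(t-\tau_1)}$. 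The drop property (4) bounds the remainder by the excess, hence by a constant times the area, so altogether $h(y(t))\le d - c\,\omega^{-2}\mathbf{Area}(R)|_0^{\omega(t-\tau_1)}$. Choosing $\omega$ and $\tau(d)$ so that $\omega\tau(d)\in[\tau_0/2,\tau_0]$ (a full drop cycle), the area is a fixed positive constant and, after tuning $\omega\sim 1/\sqrt d$, $\tau(d)\sim\sqrt d$, we obtain $h(y(\tau_1+\tau(d)))\le -c_1 d$ while $|y-\bar x|\le L\tau_0/\omega\le K\sqrt d$. This is the hard step: delicate, because $\bar x(\tau_1)$ may lie in $\mathbb{S}$ only at an instant (so $\mathbf S$ may not be non-positive along $\bar x$) and because the remainder in the Taylor expansion must be reabsorbed by the area, not by the first-order term.

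For the third piece, on $[\tau_1+\tau(d),T]$ I would set $u(t)=\bar u(t-\tau(d))$. As long as $y$ stays at distance $\ge c_1 d/k_h$ inside $C$, standard Grönwall estimates give $|y(t)-\bar x(t-\tau(d))|\le K e^{k_f T}\sqrt d$, and feasibility is preserved near $\mathbb S$ because (H6) yields a drift of order $d_{\mathbb{S}}(y)$ toward the interior whenever the trajectory threatens to exit, and far from $\mathbb{S}$ the classical (IPC) with the minimizing control of (H6)$'$ does the same. If $\bar x$ violates the constraint again at some later time $\tau_2$, the construction is iterated; the number of iterations is bounded (by an argument using the time $\sim\sqrt d$ each correction requires) independently of $d$ as $d\to 0$, so the cumulative estimates remain of the claimed order. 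Finally, the control-derivative estimates (1.9)-(1.10) come from decomposing $u-\bar u$ into (a) the rotational piece on a set of measure $\tau(d)\sim\sqrt d$, which contributes $O(\sqrt d)$ in $L^1$, and (b) the shift $\bar u(\cdot-\tau(d))-\bar u(\cdot)$ on the tail, which is the second term in the stated bound; the $W^{1,1}$ conclusion under bounded variation of $\dot{\bar x}$ follows from the standard estimate $\|\dot{\bar x}(\cdot-\tau)-\dot{\bar x}\|_{L^1}\le \tau\,\mathrm{Var}(\dot{\bar x})$.
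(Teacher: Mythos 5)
Your overall architecture (keep $\bar x$ up to $\tau_1$, spiral with a rotational control adapted to $\mathbf S$ on $[\tau_1,\tau_1+\tau(d)]$ with $\tau(d)\sim\sqrt d$, then run the time-shifted reference control) is the paper's, and your expansion of $h$ splitting $\Lambda$ into $S+A$ and absorbing the remainder into the area via the excess is the right core computation. But there are two genuine gaps. First, your case split at $\tau_1$ is wrong: you send every $\bar x(\tau_1)\notin\mathbb S$ to the classical (IPC) argument ``with linear-in-$d$ estimates,'' yet the (IPC) constant $\gamma$ there is only of order $d_{\mathbb S}(\bar x(\tau_1))$ (this is exactly what \textbf{(H6)} quantifies), so the resulting constants blow up as the point approaches $\mathbb S$. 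The paper splits instead according to $d_{\mathbb S}(\bar x(\tau_1))\ge\varepsilon$ versus $<\varepsilon$, and the genuinely delicate case is a starting point \emph{near but not on} $\mathbb S$: there the first-order term $\sum_\ell \nabla h\cdot f_\ell\,U^\ell$ does not vanish, the drop's phase must be chosen as a function of $x$ (so that $r(0)$ opposes $(\nabla h\cdot f_1,\nabla h\cdot f_2)$, giving a contribution $\le -k_x d_0\,d_{\mathbb S}(x)\,t+\tfrac{L}{2}\omega t^2 d_{\mathbb S}(x)$), and \textbf{(H6)} is used to dominate both this term and the error $|(\mathbf S(x)-\mathbf S(y_0))(U,U)|\le L\,d_{\mathbb S}(x)\,\omega t^2$ when $\mathbf S$ is only non-positive at the nearest point $y_0\in\mathbb S$. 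You flag this difficulty (``$\mathbf S$ may not be non-positive along $\bar x$'') but do not resolve it; as written, your scheme has no uniform estimate for initial points in the annulus $0<d_{\mathbb S}<\varepsilon$.

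Second, your feasibility argument on $(\tau_1+\tau(d),T]$ does not close. Your Gr\"onwall bound $|y(t)-\bar x(t-\tau(d))|\le K\sqrt d$ comes from the crude endpoint estimate $|y(\tau_1+\tau(d))-\bar x(\tau_1)|\lesssim k_0\tau(d)$, and feeding it into the expansion of $h$ along the tail produces an error of order $T\sqrt d$, which swamps the gain $\sim d$ obtained by the spiral; your fallback — that \textbf{(H6)} ``yields a drift toward the interior whenever the trajectory threatens to exit'' — is invalid there, because on that interval the control is the prescribed shifted reference $\bar u(\cdot-\tau(d))$, not a control you are free to choose (\textbf{(H6)}$'$ only asserts existence of an inward control). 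The missing ingredient is that the rotational control is run over an \emph{integer number of periods}, so $U^\ell(\tau(d))=0$ and hence $y(\tau_1+\tau(d))-\bar x(\tau_1)$ equals the bracket displacement plus error, i.e.\ is $O(\tau^2(d))=O(d)$ (the paper's Proposition \ref{prop:est_trajectory}); Gr\"onwall then gives $|y(t+\tau(d))-\bar x(t)|\le\alpha_0\tau^2(d)e^{k_fT}$, and the Taylor expansion of $h$ yields $h(y(t))\le -\gamma\alpha\,\tau^2(d)+d+Tb_0\tau^2(d)<0$ once $T\le\gamma\alpha/(4b_0)$ and $\tau(d)=\sqrt{2d/(\gamma\alpha)}$, the restriction on $T$ being removed by repeating the construction a number of times depending on $T$ and the data, not on $d$. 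Note also that your tuning $\omega\tau(d)\in[\tau_0/2,\tau_0]$ does not enforce the full-period condition, so it forfeits precisely the cancellation your tail estimate needs; and your iteration count ``bounded independently of $d$'' is asserted, not proved, whereas with the correct $O(d)$ tracking no re-violation occurs on the short interval at all.
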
     

\section{Examples}\label{sec:examples}
\subsection{Counterexamples}
We recall first an example which is essentially known, and shows that the failure of the first order inward pointing
condition may not allow to construct neighboring feasible trajectories. 
In $\mathbb{R}^2$, let $f(x_1,x_2)=(1,0)$ and consider the dynamics
\begin{equation}\label{counterex}
(\dot{x}_1,\dot{x}_2)=f(x_1,x_2)u,\quad u\in [-1,1],
\end{equation}
with the constraint
\begin{equation}\label{constr_counterex}
C= \big\{ (x_1,x_2): x_2\le x_1^2 \big\}.
\end{equation}
For $\varepsilon > 0$, define the family of trajectories
\[
\gamma_\varepsilon (t) := (-\varepsilon,\varepsilon^2)+ (t,0),\quad t\in [0,1].
\]
The neighboring feasible trajectory that is the closest to $\gamma_\varepsilon$ is $y_\varepsilon:\equiv
(-\varepsilon,\varepsilon^2)$, while the extent $d$ of the deviation 
of $\gamma_\varepsilon$ from the constraint, as defined in \eqref{eq: expression of the violating extent}, is $\varepsilon^2$. 
Therefore no estimate of the distance between
$y_\varepsilon$ and $\gamma_\varepsilon$ that vanishes as $\varepsilon \to 0$ can be obtained. 
Setting $g(\xi,\eta)=\frac12 (\eta-1)^2$, we see also that the value function $v$ of the problem of minimizing $g$ along all solutions
of \eqref{counterex} belonging to $C$ for all $t\in [0,1]$ is not upper semicontinuous at $\xi=(0,0)$. 
The analogous Example IV.5.3
in \cite{BCD} shows that the value function of a suitable infinite horizon discounted problem (with a Lipschitz Lagrangean) under
the dynamics \eqref{counterex} subject to the constraint \eqref{constr_counterex} fails as well to be continuous at $(0,0)$.
The first order IPC condition, as well as \textbf{(H4)}, are obviously violated.
\subsection{Positive examples and sharpness of the assumption \textbf{(H3)}}
Consider Brockett's non-holonomic integrator
\begin{equation}\label{eq:syst}\dot x = f_1(x)u_1 + f_2(x)u_2, 
\end{equation}
with
\begin{equation}\label{nonhol}
f_1 = \left(
\begin{array}{c}
1\\
0\\ -x_2
\end{array}
\right) \ , \quad \quad f_2 = \left(
\begin{array}{c}
0\\
1\\ x_1
\end{array}
\right),
\end{equation}
subject to the flat state constraint 
\[   h(x) := x_3 \le 0   , \quad  \text{for all } x \qquad \text{for }  h: \R^3 \to \R. \]
Observe that the vector fields $f_1,f_2$ involved in this example fail to satisfy the inward 
pointing condition \eqref{classical_IPC} at any point $y$ belonging to the $x_3$-axis. On the contrary, 
the Lie Bracket of $f_1$ and $f_2$ at any point $x\in \R^3$ is
\[  [f_1,f_2](x) = \left(
\begin{array}{c}
0\\
0\\ 2
\end{array}
\right)  , \] 
and verifies
\[[f_1,f_2](x) \notin \big\{ u_1f_1(x)+u_2f_2(x) \text{ for all } (u_1,u_2) \text{ such that } u_1^2+u_2^2 \le 1  \big\}, \qquad \text{ for all } x\in \R^3\]
together with
\[   \nabla h(x)\cdot [f_1,f_2](x) > 0 \  \text{ for all } x\in \R^3   .\] 
Moreover, all entries of the matrix $S$ are easily seen to be zero at all points in $\mathbb{R}^3$, so that \textbf{(H5)} is verified. 
Finally, $(\nabla h(x)\cdot f_1(x))^2+(\nabla h(x)\cdot f_2(x))^2=x_1^2+x_2^2$, so that \textbf{(H6)} holds as well, with $d_0=1$.

The more general state constraint
\[
h(x_1,x_2,x_3):= \lambda (x_1^2+x_2^2)^p +x_3 \le 0,\; \lambda > 0,
\]
still satisfies all the assumptions \textbf{(H1)}-\textbf{(H6)}, provided $p\ge \frac32$, for the vector fields \eqref{nonhol}. 
Indeed, $h$ is of class $\mathcal{C}^{2,1}$ for all $p\ge \frac32$,
the singular set $\mathbb{S}$ still coincides with the $x_3$-axis, and $\nabla h(x)\cdot [f_1,f_2]=2$ on it, 
and the quadratic form $S$ is easily seen to be vanishing on $\mathbb{S}$, provided $p>1$. Finally,
$\big((\nabla h(x)\cdot f_1(x))^2+(\nabla h(x)\cdot f_2(x))^2\big)/(x_1^2+x_2^2)$ is easily seen to be
bounded away from zero as $(x_1,x_2)\to (0,0)$, provided again $p>1$. 

Observe that Brockett's vector fields allow explicit computations:
the exact solution of the Cauchy problem
\[ \dot x= f_1u_1+f_2u_2 \qquad x(0)=(0,0,0) \]
for the particular choice of rotational controls $(u_1(t),u_2(t))=(\cos \omega t, \sin \omega t)$
---indeed, the rotational strategy that we will employ in the construction of Sect. \ref{subs:screw}
was inspired by this example--- is
\begin{align*}
& x_1(t) = \frac{ \sin \omega t}{\omega }, \qquad x_2(t) = \frac{1 -  \cos \omega t}{\omega }, \qquad x_3(t) = \frac{\omega t- \sin \omega t}{\omega^2}.
\end{align*}
The scalar function $h$ at the computed trajectory $x(.)$ is:
\[ h(x(t)) =  2^p \lambda  \left( \frac{1-\cos (\omega t)}{\omega^2} \right)^{p} +\frac{ \omega t - \sin \omega t  }{\omega^2} . \]
A series expansion around $t=0$ shows that
\[
h(x(t))=\frac{\omega}{6}t^3+o\big(t^3\big) + \lambda t^{2p}+o\big(t^{2p}\big).
\]
Hence, for $p<\frac32$ the constraint $h(x(t))\le 0$ is not satisfied at all $t>0$ small enough, independently of the choice of $\omega$, 
while if $p\ge \frac32$ it is satisfied for all $t$ small enough if $\omega < - 6\lambda$. 
This shows that the assumption of $\mathcal{C}^{2,1}$-regularity on $h$ is
sharp for the construction that will allow us to prove Theorem \ref{theorem:NFT}. 

More in general, consider the system \eqref{eq:syst} with $f_1$, $f_2$ given by
\[
f_1(x,y,z) = \left(
\begin{array}{c}
\gamma_1\\
0\\ -x_2
\end{array}
\right) \ , \quad \quad f_2(x,y,z) = \left(
\begin{array}{c}
0\\
\gamma_2\\ x_1
\end{array}
\right)
\]
together with the constraint $\{ (x,y,z): h(x,y,z)\le 0\}$, with
\[
h(x,y,z):= \lambda (c_1 x^2+c_2y^2)^p +z.
\]
Straightforward computations show that the singular set $\mathbb S$ is $\{ (0,0,z): z\in\mathbb R\}$ and that condition \textbf{ (H6)}
is satisfied for all choices of the parameters $\gamma_1,\gamma_2,c_1,c_2$. Moreover, condition \textbf{(H4)} 
is satisfied if and only if
$\gamma_1\neq -\gamma_2$, while the determinant of the quadratic form $\mathbf S$, at all points of $\mathbb S$, is
$-\frac14 (\gamma_1-\gamma_2)^2$, so that condition \textbf{(H5)} is satisfied as well. 
Indeed, observe that if $\gamma_1\neq\gamma_2$
then the quadratic form $\mathbf S$, at all points of the singular set, is indefinite, while if $\gamma_1=\gamma_2$ 
the vector fields are
equivalent to \eqref{nonhol} and $\mathbf{S}$ vanishes on $\mathbb{S}$. 

Similar computations show also that the nonlinear vector fields
\[
f_1 = \left(
\begin{array}{c}
1\\
0\\ -x_2 + x_3^4
\end{array}
\right) \ , \quad \quad f_2 = \left(
\begin{array}{c}
0\\
1\\ x_1 + x_3^4
\end{array}
\right)
\]
together with the flat constraint $h(x_1,x_2,x_3):=x_3\le 0$ satisfy all assumptions \textbf{(H1)}--\textbf{(H6)}. Indeed, in this case the singular set is
a regular curve,
\[
\mathbb{S}=\{ (x_1,x_2,x_3): x_1=-x_3^4,\ x_2=x_3^4 \}
\]
and 
\[
\nabla h(x)\cdot [f_1,f_2](x)=2\big(1-2x_3^3 (x_1+x_2)\big),
\]
so that on $\mathbb{S}$ one has 
\[
\nabla h(x)\cdot [f_1,f_2](x)=2.
\]
The matrix $\mathbf{S}(x)$ is
\[
\left(
\begin{array}{lr}
4x_3^3(x_3^4-x_2)& 2x_3^3 (x_1-x_2+2x_3^4)\\
2x_3^3 (x_1-x_2+2x_3^4)&4x_3^3(x_3^4+x_1)
\end{array}
\right) 
\]
that vanishes on $\mathbb{S}$. Finally, since 
\[
(\nabla h(x)\cdot f_1(x))^2+(\nabla h(x)\cdot f_2 (x))^2=(x_2-x_3^4)^2+(x_1+x_3^4)^2=\left\| \left(x_1,x_2,x_3\right)-\left(-x_3^4,x_3^4,x_3\right)\right\|^2,
\]
that is larger than or equal to the squared distance from $x$ to $\mathbb{S}$, it is clear that \textbf{(H6)} is satisfied as well.

\subsection{Independence of assumption \textbf{(H6)} from \textbf{(H4)}}
Let
\[
C := \{ x\in\mathbb{R}^3 : h(x)\equiv x_3\le 0\}
\]
together with the dynamics
\[
\dot{x}= f_1(x)u_1 + f_2(x)u_2,\; u = (u_1, u_2) \in \{u\in \mathbb{R}^2: |u|\le 1 \},
\]
where
\[
f_1 = \left(
\begin{array}{c}
1\\
0\\ -x_2
\end{array}
\right) \ , \quad \quad f_2 = \left(
\begin{array}{c}
0\\
1\\ x_1^2
\end{array}
\right).
\]
In this example, $\nabla h(x) = (0, 0, 1)$ and the singular set $\mathbb S$
coincides with the $x_3$-axis.
Hypothesis \textbf{(H4)} is verified, for $\nabla h(x) \cdot [f_1, f_2](x) = 1$ for all $x$.
Recall \textbf{(H6)}: There exist $d_0 > 0$ and $\delta >0$ such that, for all $\bar x\in \mathbb{S}$,
\[
(\nabla h(x) \cdot f_1(x))^2 + (\nabla h(x) \cdot f_2(x))^2  \ge d^2_0 d_{\mathbb{S}}^2(x)\; \text{ for all } x \in \bar x + \delta\mathbb{B}.
\]
In our case, \textbf{(H6)} is not satisfied. Indeed,
\[
(\nabla h(x) \cdot f_1(x))^2 + (\nabla h(x) \cdot f_2(x))^2= x_2^2 + x_1^4,
\]
while
\[
d^2_\mathbb{S}(x) = x_1^2 + x_2^2.
\]

\section{Proof of Theorem \ref{theorem:NFT}}\label{sec:proofNFT}
The proof of Theorem \ref{theorem:NFT} will be divided into some steps, as follows:
\begin{itemize}
\item[1)] Choosing a family of rotational  controls and providing some estimates on the corresponding solutions.
\item[2)] Constructing a trajectory in the interior of the constraint, assuming a second order IPC and choosing rotational controls  acccording with a base drop generator adapted to the quadratic form $\mathbf{S}(x)$ at the points  at the singular set $\mathbb{S}$ or near it.
\item[3)] Concluding the proof of Theorem \ref{theorem:NFT} by combining the above results.
\end{itemize}
\subsection{Implementing rotational controls}\label{subs:screw}
This section is devoted to deriving some estimates along trajectories that correspond to the rotational controls defined in Sect. \ref{sec:rot_contr}.

To state the results, let us choose any drop generator $r$ of period $\tau_0 >0$, and set 
$\mathcal{A}(t):=\mathbf{Area}(R)_0^t$, $R$ being the primitive drop of $r$. 
\begin{proposition}\label{prop:est_trajectory} Assume \textbf{(H1)} and \textbf{(H2)} and choose any drop-generator $r$.
Then there exist a constant $B>0$, independent of $x_0$ in a compact set, and a map $t\mapsto E(t) $ verifying 
$|E(t)|\leq B\|r\|_\infty^2 t^3$ such that, if $k\in \mathbb{Z}$, $\tau\in[0,1]$ 
and we set $|\omega|:=k\tau_0 \tau^{-1}$, $u_\omega(t):=r(\omega t)$, the estimate
				\begin{equation} \label{eq: equality for the feasible trajectory}
					y(\tau) - x_0 =  \tau^2\frac{\mathcal{A}(\tau_0)}{k\tau_0^2}[f_2,f_1](x_0)
					 +  E(\tau).\qquad 
				\end{equation}
holds, where $y(\cdot)$ denotes the solution to the Cauchy problem \eqref{control system}
corresponding to the control $u_\omega$.
\end{proposition}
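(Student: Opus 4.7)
The natural approach is a second-order Picard/Taylor expansion of $y$ around $x_0$, which isolates the first iterated integrals of $u_\omega$ at order one and the second iterated integrals at order two, followed by the use of the resonance relation $|\omega|\tau=k\tau_0$ (i.e.\ the control completes exactly $k$ full periods of the drop generator $r$) together with (\ref{U ell m}) and (\ref{area om}) to extract the Lie bracket $[f_2,f_1]$.

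First I would record an \emph{a priori} bound: by \textbf{(H1)}--\textbf{(H2)}, $|\dot y(s)|\leq 2k_0\|r\|_\infty$, so $y([0,\tau])$ stays in a fixed compact neighborhood $K$ of $x_0$ on which $f_1,f_2$ are $C^2$, and $|y(s)-x_0|\leq 2k_0\|r\|_\infty s$. Starting from $y(\tau)-x_0=\sum_i\int_0^\tau u_\omega^i(s)f_i(y(s))\dd s$, I Taylor-expand $f_i(y(s))$ to first order around $x_0$ and plug in the zeroth-order approximation $y(s)-x_0=\sum_j U_\omega^j(s)f_j(x_0)+O(\|r\|_\infty^2 s^2)$ (itself obtained by one iteration of the same argument). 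This gives
\begin{equation*}
y(\tau)-x_0=\sum_i f_i(x_0)U_\omega^i(\tau)+\sum_{i,j} Df_i(x_0)f_j(x_0)\,U_\omega^{ij}(\tau)+E(\tau),
\end{equation*}
where a standard Gr\"onwall-style bookkeeping of the Taylor and first-iterate remainders, leaning on the $C^2$-bounds on $f_1,f_2$ on $K$, produces the pointwise estimate $|E(\tau)|\leq B\|r\|_\infty^2\tau^3$ uniformly in $x_0$ on a compact set and in the angular velocity $\omega$.

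The second step is to specialize to the resonant time. Since $r$ is $\tau_0$-periodic with vanishing mean, a change of variable gives $U_\omega^i(\tau)=\omega^{-1}\int_0^{k\tau_0}r^i=0$, annihilating the first-order contribution. By (\ref{U ell m}) the symmetric part $U_{S,\omega}^{ij}(\tau)=\tfrac12 U_\omega^i(\tau)U_\omega^j(\tau)$ also vanishes, so only the antisymmetric components $U_{A,\omega}^{21}(\tau)=-U_{A,\omega}^{12}(\tau)$ survive. Using (\ref{area om}) together with the periodicity $R(k\tau_0)=R(0)=0$, which makes $\mathbf{Area}(R)$ additive across periods,
\[
U_{A,\omega}^{21}(\tau)=\frac{1}{\omega^2}\mathbf{Area}(R)\big|_0^{k\tau_0}=\frac{k\,\mathcal A(\tau_0)}{\omega^2}=\frac{\mathcal A(\tau_0)\,\tau^2}{k\,\tau_0^2}.
\]
Regrouping the $(1,2)$ and $(2,1)$ cross terms via the paper's convention $[g_1,g_2]=Dg_2g_1-Dg_1g_2$, so that $Df_1f_2-Df_2f_1=[f_2,f_1]$, yields precisely the asserted identity.

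The main obstacle I expect is ensuring that the bound on $E(\tau)$ is uniform in $\omega$: naively, one might worry that integrating the rapidly oscillating $u_\omega$ introduces unwanted factors of $|\omega|$. The point is that the expansion only ever touches $u_\omega$ through the $L^\infty$-bounded integrated quantities $U_\omega^i$ and $U_\omega^{ij}$, whose norms are in fact $O(1/|\omega|)$ and $O(1/\omega^2)$ respectively by (\ref{wt}). Combined with the $C^2$-regularity of $f_1,f_2$ on the common compact $K$, this yields the desired uniform $\|r\|_\infty^2\tau^3$ remainder and closes the argument.
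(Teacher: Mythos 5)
Your proposal is correct and follows essentially the same route as the paper's proof: a second-order Picard/Chen-type expansion with coefficients frozen at $x_0$, an error term controlled by the Lipschitz bounds from \textbf{(H1)} giving $|E(\tau)|\leq B\|r\|_\infty^2\tau^3$ uniformly in $\omega$, and then the resonance argument (zero mean of $r$, vanishing of the symmetric iterated integrals via \eqref{U ell m}, and the area identity \eqref{area om} with additivity over the $k$ periods) to isolate the Lie bracket term. Your closing remark on uniformity in $\omega$ matches the paper's treatment, since the estimate only uses the pointwise bound $|u_\omega|\leq\|r\|_\infty$ inside the iterated integrals.
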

\begin{proof} For every $t\in[0,\tau]$ we have (writing $u$ in place of $u_\omega$)
\begin{align*}
y(t) - x_0    &=\sum_{\ell =1}^2 {\int_0^t}  f_\ell (y(s)) u^\ell (s) \dd s
			 =
			 \sum_{\ell =1}^2{\int_0^t}  \left[f_\ell (x_0)+(f_\ell (y(s))-f_\ell (x_0))\right] u^\ell (s)  \dd s\\	
			 &=\sum_{\ell =1}^2  \left[f_\ell(x_0)U^\ell(t) +  {\int_0^t}\left( \int_0^s Df_\ell(y(\xi))\dot{y}(\xi) d\xi\right)u^\ell(s)  \dd s\right]\\
			 &=\sum_{\ell =1}^2  \left[f_\ell(x_0)U^\ell(t) +  {\int_0^t}\left( \int_0^s Df_\ell(y(\xi))
\sum_{j=1}^2f_j(y(\xi))u^j(\xi) d\xi\right)u^\ell(s)  \dd s\right]\\
			  &=\sum_{\ell =1}^2  \left[f_\ell(x_0)U^\ell(t) + \sum_{j=1}^2 Df_\ell(x_0) f_j(x_0){\int_0^t} 
			  U^j(s)u^\ell(s)\dd s\right. +\\&
		\left.	\qquad\qquad \qquad  +\sum_{j=1}^2\int_0^t\left( \int_0^s \big(Df_\ell(y(\xi))f_j(y(\xi))- Df_\ell(x_0)f_j(x_0)\big)u^j(\xi) d\xi\right)u^\ell(s)  \dd s\right]\\
	&= \sum_{\ell =1}^2  \left[f_\ell(x_0)U^\ell(t) + \sum_{j=1}^2 Df_\ell(x_0) f_j(x_0)
  	U^{\ell j}(t)\right]+  E(t), \end{align*}
where \[    E(t):= \sum_{j,\ell =1}^2E_{\ell j}(t), \quad E_{\ell j}(t):= 
 \sum_{j=1}^2\int_0^t\left( \int_0^s \big(Df_\ell(y(\xi))f_j(y(\xi))- Df_\ell(x_0)f_j(x_0)\big) 
 u^j(\xi) d\xi\right)u^\ell(s)  \dd s.\]
Observe that for every $\ell,j=1,2$,
\[
| E(t)|\leq \sum_{\ell,j=1}^2 |E_{\ell j}(t)| \leq 4 K \|r\|_\infty^2 \int_0^t \frac{s^2}{2} \dd s = \frac23 K \|r\|_\infty^2 t^3,
\]
where $K$ is a Lipschitz constant\footnote{$K$ exists for $t$ on a compact set,  
because of the Lipschitz continuity of $Df_i$ and $f_i$, $i=1,2$.} for $Df_\ell\cdot f_j (y(\cdot))$, $\ell,j=1,2$.

Hence, by grouping together symmetric and antisymmetric parts,	
\[
\begin{split}
y(t) - x_0 &= \sum_{\ell =1}^2  f_\ell(x_0)U^\ell(t)+ \frac 12 \sum_{j,\ell=1}^2\Big(Df_\ell(x_0) f_j(x_0)
+Df_j(x_0) f_\ell(x_0)\Big)U^{\ell j}_S(t)  \\ 
&\quad + \frac 12 \sum_{j,\ell=1}^2[f_j,f_\ell]U^{\ell j}_A(t) + E(t).
\end{split}
\]	
Therefore, since by $\tau_0$-periodicity and by \eqref{U ell m} it is
\[
 U^j(\tau)=  R^j(k\tau_0) =0\quad\text{and}\quad U^{\ell j}_S(\tau) =\frac12  U^\ell(\tau) U^j(\tau) =0,
\]
recalling also \eqref{area om} one obtains
\[
\begin{split}
y(\tau) - x_0 &= \frac 12 \sum_{j,\ell=1}^2[f_j,f_\ell]\mathbf{Area}(U^\ell,U^j)|_0^\tau +\sum_{j,\ell=1}^2 E^{\ell j}(\tau) \\
&= \frac {\tau^2}{2k^2\tau_0^2} \sum_{j,\ell=1}^2[f_j,f_\ell]\mathbf{Area}(R^\ell,R^j)|_0^{k\tau_0} + \sum_{j,\ell=1}^2E^{\ell j}(\tau)\\
&= \frac {\tau^2}{2k^2\tau_0^2} \sum_{j,\ell=1}^2[f_j,f_\ell]\,k\,\mathbf{Area}({R}^\ell,R^j)|_0^{\tau_0} + \sum_{j,\ell=1}^2E^{\ell j}(\tau)\\
& =   \tau^2 \frac{\mathcal{A}(\tau_0)}{k\tau_0^2}[f_2,f_1] + E(\tau).
\end{split}
\]
The proof is concluded, with $ \displaystyle E(t):= \sum_{j,\ell =1}^2E_{\ell j}(t)$ and 
$B:=\frac23 K$.\end{proof}		

\begin{proposition}\label{proposition: estimate on y in terms of h}
Let us assume hypotheses \textbf{(H1)}--\textbf{(H5)} hold, and let $x_0\in \mathbb{S}$. 
If $\mathbf{S}(x_0)$ is negative semidefinite 
or definite, choose any drop generator $r(.)$ with period $\tau_0$, while, if $\mathbf{S}(x_0)$ is indefinite, choose  
$\displaystyle(\varphi,\beta)\in \left\{\varphi_{\mathbf{S}},-\varphi_{\mathbf{S}} \right\}\times \left\{\beta_{\mathbf{S}},
\frac{\beta_{\mathbf{S}}}{2}\right\}$ and any pointed drop generator $r$  
with period $\tau_0$, phase $\varphi$ and half-amplitude less
than $\beta$. Then there exists $\bar\tau>0$ such that , for all $0<\tau<\bar\tau$,
\begin{equation}\label{stima6} 
h(y_\omega(t)) - h(x_0) \leq  \frac12 \nabla h(x_0) \cdot
	[f_1,f_2](x_0) \frac{1}{\omega^2}\mathbf{Area}(R)_0^{\omega t}\quad \forall t\in[0,\tau ] ,\,\,
\end{equation}
where $y=y_\omega $  is the solution corresponding to the control $u(t) =u_\omega(t) :=r(\omega t)$, with 
$|\omega|=\frac{\tau_0}{\tau}$.
\end{proposition}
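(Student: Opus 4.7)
The plan is to Taylor-expand $h(y_\omega(t))-h(x_0)$ to second order in the state, isolate a symmetric and an antisymmetric contribution of the resulting bilinear form, use the adapted-drop hypothesis to discard the symmetric one and use hypothesis (H4) together with the sign choice of $\omega$ to make the antisymmetric one negative and dominant. First I would write
\[
h(y(t))-h(x_0)=\int_0^t\sum_{\ell=1}^2 u^\ell(s)\,\nabla h(y(s))\cdot f_\ell(y(s))\,\dd s
\]
and Taylor-expand $\nabla h(y(s))\cdot f_\ell(y(s))$ around $x_0$. Because $x_0\in\mathbb{S}$, the constant terms $\nabla h(x_0)\cdot f_\ell(x_0)$ vanish, and the gradient of $\nabla h\cdot f_\ell$ at $x_0$ is exactly the $\ell$-th row that appears in the entries $\Lambda_{\ell m}$ defined in \eqref{equation: S+A italy}. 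Replacing $y(s)-x_0$ by its leading approximation $\sum_m U^m(s)f_m(x_0)$ (as in the chain of identities inside the proof of Proposition \ref{prop:est_trajectory}) and integrating, I obtain
\[
h(y(t))-h(x_0)=\sum_{\ell,m=1}^2\Lambda_{\ell m}(x_0)\,U^{\ell m}(t)+\mathcal{E}(t),
\]
where $|\mathcal{E}(t)|\le Ct^3$ by arguments parallel to those bounding $E(t)$ in Proposition \ref{prop:est_trajectory}, using (H1) and (H3).

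Next I would split $\Lambda=S+A$ and $U^{\ell m}=U^{\ell m}_S+U^{\ell m}_A$; the cross sums $\sum S_{\ell m}U^{\ell m}_A$ and $\sum A_{\ell m}U^{\ell m}_S$ vanish by (anti)symmetry. Using $U^{\ell m}_S(t)=\tfrac12 U^\ell(t)U^m(t)$ from \eqref{U ell m}, the symmetric piece equals $\tfrac12\mathbf{S}(x_0)\bigl(U(t)\bigr)$. Using $A_{21}=-A_{12}=\tfrac12\nabla h(x_0)\cdot[f_1,f_2](x_0)$ from \eqref{def_Alm} together with $U^{21}_A=-U^{12}_A=\mathbf{Area}(U^1,U^2)|_0^t$, the antisymmetric piece equals $\nabla h(x_0)\cdot[f_1,f_2](x_0)\cdot\mathbf{Area}(U^1,U^2)|_0^t$. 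Combining with \eqref{area om} this produces
\[
h(y(t))-h(x_0)=\tfrac12\mathbf{S}(x_0)\bigl(U(t)\bigr)+\nabla h(x_0)\cdot[f_1,f_2](x_0)\,\tfrac1{\omega^2}\mathbf{Area}(R)_0^{\omega t}+\mathcal{E}(t).
\]
Since the chosen drop $r$ is adapted to $\mathbf{S}(x_0)$, the relation $U(t)=\tfrac1\omega R(\omega t)$ and \eqref{S<0} give $\mathbf{S}(x_0)(U(t))\le 0$, so the symmetric term can be dropped.

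It remains to absorb $\mathcal{E}(t)$ into half of the Lie-bracket term. By (H4), $|\nabla h(x_0)\cdot[f_1,f_2](x_0)|>\alpha>0$. I would choose $\mathrm{sgn}(\omega)$ so that $\nabla h(x_0)\cdot[f_1,f_2](x_0)\cdot\mathbf{Area}(R)_0^{\omega t}\le 0$ (possible because $\mathbf{Area}(R)|_0^\sigma$ is strictly increasing in $\sigma$ by Definition \ref{drop}(4), so reversing $\omega$ reverses its sign). Then, using $|\omega|=\tau_0/\tau$ and the cubic lower bound $\mathbf{Area}(R)_0^{\omega t}\gtrsim |\omega t|^3$ (explicit in Sect. \ref{constr_drops}, and in general a consequence of Definition \ref{drop}(4) combined with the arclength parametrization and the Lipschitz bound on $\dot R$), the leading term has magnitude $\gtrsim \alpha\,|\omega|\,t^3=\alpha\,(\tau_0/\tau)\,t^3$, while $|\mathcal{E}(t)|\le Ct^3$. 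Hence
\[
\frac{|\mathcal{E}(t)|}{|\nabla h(x_0)\cdot[f_1,f_2](x_0)\cdot \omega^{-2}\mathbf{Area}(R)_0^{\omega t}|}\le \frac{C'\tau}{\tau_0}\longrightarrow 0\quad\text{as }\tau\to 0^+,
\]
so for $\tau<\bar\tau$ small enough the ratio is below $1/2$, giving $\mathcal{E}(t)\le -\tfrac12\nabla h(x_0)\cdot[f_1,f_2](x_0)\cdot\omega^{-2}\mathbf{Area}(R)_0^{\omega t}$ and hence the claimed inequality \eqref{stima6}.

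The main obstacle I anticipate is the last step: producing the cubic lower bound on $\mathbf{Area}(R)_0^{\omega t}$ that is uniform in $\omega t\in[0,\tau_0]$ from the drop axioms alone. Condition (4) of Definition \ref{drop} controls the excess by the area but does not obviously give a cubic lower bound; one should combine it with arclength parametrization and the Lipschitz constant $L$ of $\dot R$, or, failing that, restrict to the specific adapted drops constructed in Sect. \ref{constr_drops} (circle-like and pointed), for which the cubic lower bound was verified explicitly.
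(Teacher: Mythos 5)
Your expansion, the symmetric/antisymmetric splitting, the use of adaptedness to discard $\tfrac12\mathbf{S}(x_0)\big(U(t)\big)$, and the use of \textbf{(H4)} together with the sign choice of $\omega$ reproduce exactly the paper's argument. The one place where you diverge is the absorption of the error term, and that is where your proof has a genuine gap --- one you flag yourself. Bounding $\mathcal{E}(t)$ crudely by $Ct^3$ forces you to invoke a cubic lower bound on $\mathbf{Area}(R)_0^{\sigma}$, which is not among the drop axioms, and your fallback (restricting to the explicit drops of Section \ref{constr_drops}) would only prove a weaker statement than the proposition, which allows an arbitrary drop generator in case \textbf{(H5)(i)} and an arbitrary adapted pointed one in case \textbf{(H5)(ii)}.

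The paper closes this step differently, and this is precisely what item (4) of Definition \ref{drop} is designed for. Writing the error as $\sum_{\ell,m}\int_0^t\big(\int_0^s g_{\ell m}(\xi)u^m(\xi)\dd\xi\big)u^\ell(s)\dd s$ with $g_{\ell m}$ Lipschitz and $g_{\ell m}(0)=0$, one gets $|g_{\ell m}(\xi)|\le L\xi$, and after rescaling to the drop parameter the error is bounded by $\frac{L}{|\omega|^3}\mathbf{Exc}(R)_0^{|\omega|t}$ (this is \eqref{stimaE}); the axiom $\mathbf{Exc}\le C_r\,\mathbf{Area}$ then lets one absorb it into half of the bracket term, whose size is $\frac{\alpha}{\omega^2}\mathbf{Area}(R)_0^{\omega t}$ by \textbf{(H4)}, as soon as $|\omega|\ge 2LC_r/\alpha$, i.e.\ for $\tau$ small; no lower bound on the area is ever needed. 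Incidentally, the bound you were missing does in fact follow from the axioms: since $R$ is parameterized by arclength, $|\dot R_1(s)|+|\dot R_2(s)|\ge 1$ a.e., hence $\mathbf{Exc}(R)_0^{\sigma}\ge \sigma^3/6$, and item (4) then gives $\mathbf{Area}(R)_0^{\sigma}\ge \sigma^3/(6C_r)$ on $[0,\tau_0]$, so your route can be completed for general drops; still, the rescaled excess estimate is the cleaner (and intended) mechanism, since it compares error and gain directly through the excess--area inequality.
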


\begin{corollary}
Let, in Proposition \ref{proposition: estimate on y in terms of h}, $r(.)$ be the generator of either the drops constructed in Section
\ref{constr_drops}, hence verifying, for some $C,\bar t >0$,
\[
\mathbf{Area}(R)_0^{s}\geq C s^3\quad\forall s\in[0,\bar t].
\]
Set $\hat\tau:=\min\{\bar\tau,\bar t\}$ and let $0<\tau <\hat\tau$ and $|\omega|=\frac{\tau_0}{\tau}$. 
Choose the sign of $\omega$ as 
\begin{equation}\label{sign_om}
\mathrm{sgn}(\omega) = - \mathrm{sgn}\big(\nabla h(x_0) \cdot[f_1,f_2](x_0)\big).
\end{equation}
Then the estimate
\begin{equation}\label{stima7} 
h(y_\omega(t)) - h(x_0) \leq  \frac{C}{2} \nabla h(x_0) \cdot
	[f_1,f_2](x_0)\, \omega t^3\quad \forall t\in[0,\tau ] ,\,\,\forall 0<\tau<\hat\tau
\end{equation}
holds. In particular, recalling assumption \textbf{(H4)}, \eqref{stima7} and our choice of $\omega$ imply
\begin{equation}\label{stima8} 
h(y_\omega(t)) - h(x_0) \leq  
	-\tau_0 C\alpha \frac{t^3}{\tau}\quad \forall t\in[0,\tau ].
\end{equation}
\end{corollary}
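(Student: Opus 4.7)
My plan is to combine the estimate of Proposition \ref{proposition: estimate on y in terms of h} with the cubic lower bound on the area enjoyed by the drops constructed in Section \ref{constr_drops}, with some elementary sign bookkeeping dictated by the rule \eqref{sign_om}.

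For every $t\in[0,\tau]$ one has $|\omega t|\leq|\omega|\tau=\tau_0\leq\bar t$, so $\omega t$ lies in the range where the hypothesis $\mathbf{Area}(R)_0^s\ge Cs^3$ applies. When $\omega>0$ this gives immediately $\mathbf{Area}(R)_0^{\omega t}\ge C\omega^3 t^3$. When $\omega<0$ I would invoke the time-reversal identity $\mathbf{Area}(R)_0^{-s}=-\mathbf{Area}(R)_0^{s}$, which holds for both the circle-like and the pointed drop of Section \ref{constr_drops}: for the former this follows at once from the fact that the integrand $\dot R_2 R_1-\dot R_1 R_2$ is an even function of the parameter, while the pointed case is analogous, using the reflection/reverse structure that glues together the two halves. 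This yields $\mathbf{Area}(R)_0^{\omega t}=-\mathbf{Area}(R)_0^{|\omega|t}\le -C|\omega|^3 t^3=C\omega^3 t^3$. In either sign case, dividing by $\omega^2$ shows that $\tfrac{1}{\omega^2}\mathbf{Area}(R)_0^{\omega t}$ has the same sign as $\omega$ and its absolute value is at least $C|\omega|t^3$.

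By \eqref{sign_om}, the factor $\nabla h(x_0)\cdot[f_1,f_2](x_0)$ has sign opposite to $\omega$. Multiplying the previous bound through by $\tfrac12\nabla h(x_0)\cdot[f_1,f_2](x_0)$, and carefully checking in each sign subcase whether the inequality is preserved or reversed, yields
\[
\tfrac12\nabla h(x_0)\cdot[f_1,f_2](x_0)\,\tfrac{1}{\omega^2}\mathbf{Area}(R)_0^{\omega t}\ \le\ \tfrac{C}{2}\nabla h(x_0)\cdot[f_1,f_2](x_0)\,\omega\, t^3.
\]
Inserting this into Proposition \ref{proposition: estimate on y in terms of h} produces \eqref{stima7}. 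Finally, \eqref{sign_om} and \textbf{(H4)}, together with $|\omega|=\tau_0/\tau$, give $\nabla h(x_0)\cdot[f_1,f_2](x_0)\,\omega=-|\nabla h(x_0)\cdot[f_1,f_2](x_0)|\,|\omega|\le -\alpha\tau_0/\tau$; plugging this into \eqref{stima7} then yields \eqref{stima8}, up to renaming the constant $C$.

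The only delicate point is the case $\omega<0$: the area estimate of Section \ref{constr_drops} is stated only for positive arguments, so one must invoke the time-reversal symmetry above. Equivalently, one could replace the given drop generator $r$ by its reverse $\tilde r(s):=r(-s)$, which remains a drop generator of the same family, and thereby reduce to the case $\omega>0$ once and for all; everything else in the proof is sign tracking.
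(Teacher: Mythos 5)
Your proposal is correct and is essentially the argument the paper leaves implicit: insert the cubic lower bound $\mathbf{Area}(R)_0^{s}\ge Cs^3$ (valid on a full period for the drops of Section \ref{constr_drops}) into \eqref{stima6}, track signs via \eqref{sign_om} (using that for these specific drops the swept area is an odd function of the parameter, since $R(-t)=(R_1(t),-R_2(t))$ makes the integrand $\dot R_2R_1-\dot R_1R_2$ even), and then apply \textbf{(H4)} with $|\omega|=\tau_0/\tau$; the resulting constant is $\tfrac{C\alpha\tau_0}{2}$ rather than $C\alpha\tau_0$, consistent with the $\tfrac12$ appearing in $\alpha(x)$ of \eqref{stima88}, so your ``renaming the constant'' remark is the right reading of \eqref{stima8}. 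One caveat: your closing alternative, replacing $r$ by $\tilde r(s):=r(-s)$, does not quite work as stated, because time reversal flips the orientation, so $\mathbf{Area}(\tilde R)_0^{\sigma}=-\mathbf{Area}(R)_0^{\sigma}$ is strictly decreasing and $\tilde r$ violates item (4) of Definition \ref{drop}; this is harmless, since your main argument via the oddness of the area does not use it.
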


\begin{proof}[Proof of Proposition \ref{proposition: estimate on y in terms of h}] 
Since $\nabla h(x_0)\cdot f_\ell(x_0)=0$, $\ell=1,2$, we get, omitting the dependence on $\omega$,
	\begin{align} 
		h(y(t)) - h(x_0) &= \sum_{\ell=1}^{2} \int_{0}^{t} \nabla h(y(s)) \cdot f_\ell(y(s)) u^\ell (s) \dd s  \nonumber\\ 
		&= \sum_{\ell=1}^{2}\nabla h(x_0)\cdot f_\ell(x_0)\int_{0}^{t} u^\ell (s) \dd s 
+\sum_{\ell=1}^{2}\int_{0}^{t}  \Big(\nabla h(y(s)) \cdot f_\ell(y(s))-\nabla h(x_0) 
          \cdot f_\ell(x_0)\Big) u^\ell (s) \dd s  \nonumber\\
			&= \sum_{\ell=1}^{2} \int_{0}^{t} \Big(\nabla h(x_0) \cdot 
                       \left(f_\ell(y(s))- f_\ell(x_0)\right)\Big) u^\ell (s)
                        \dd s\nonumber\\
         &\qquad\qquad\qquad\qquad +\sum_{\ell=1}^{2}\int_{0}^{t}  \Big(\nabla h(y(s)) -\nabla h(x_0)\Big) 
\cdot f_\ell(y(s)) u^\ell (s) \dd s	 \nonumber\\	
			&=
     \sum_{\ell,m=1}^{2}\nabla h(x_0) \cdot \int_{0}^{t}\left( \int_{0}^{s} Df_\ell(y(\xi)) f_m(y(\xi))u^m(\xi)
\dd \xi\right)u^\ell(s)\dd s \nonumber\\
&\qquad\qquad\qquad\qquad+
			\sum_{\ell,m=1}^{2}\int_{0}^{t}\left( \int_{0}^{s}   \frac{\dd}{\dd\xi}\nabla h(y(\xi)) 
\dd\xi\right)f_\ell(y(s))u^\ell(s) \dd s \nonumber\\
&=\sum_{\ell,m=1}^{2}\nabla h(x_0) \cdot
		Df_\ell(x_0)\cdot f_m(x_0) U^{\ell m}(t) \nonumber\\
		&\qquad
		+\sum_{\ell,m=1}^{2}\nabla h(x_0) \cdot \int_{0}^{t}
    \left( \int_{0}^{s} \left(Df_\ell(y(\xi)) f_m(y(\xi)) - Df_\ell(x_0) f_m(x_0))\right)u^m(\xi)\, d\xi\right)u^\ell(s)\dd s \nonumber\\
	&\qquad	
	+	\sum_{\ell,m=1}^{2}\int_{0}^{t}\left( \int_{0}^{s}   D^2h(y(\xi)) f_m(y(\xi))u^m(\xi) d\xi\right)f_\ell(y(s))u^\ell(s) 
      \dd s \nonumber\\
&=\sum_{\ell,m=1}^{2}\nabla h(x_0) \cdot
		Df_\ell(x_0)\cdot f_m(x_0) U^{\ell m}(t) \nonumber\\
		&\qquad
		+\sum_{\ell,m=1}^{2}\nabla h(x_0) \cdot \int_{0}^{t}
    \left( \int_{0}^{s} \left(Df_\ell(y(\xi)) f_m(y(\xi)) - Df_\ell(x_0) f_m(x_0))\right)u^m(\xi)\, d\xi\right)u^\ell(s)\dd s \nonumber\\
	&\qquad + \sum_{\ell,m=1}^{2} D^2h(x_0) f_m(x_0) f_\ell(x_0)U^{lm}(t)\nonumber\\
	&\qquad	
	+	\sum_{\ell,m=1}^{2}\int_{0}^{t}\!\!\!\int_{0}^{s}  \big( D^2h(y(\xi)) f_m(y(\xi)) f_\ell(y(s))
- D^2h(x_0) f_m(x_0) f_\ell(x_0)\big) u^m(\xi)u^\ell(s) 
      \dd\xi\dd s .\nonumber
		\end{align}
Using next the notations introduced in Sect. \ref{sec:rot_contr}, we observe that
\[
\begin{split}
\sum_{\ell,m=1}^{2} 
Df_\ell(x_0)\cdot f_m(x_0) U^{\ell m}(t) &=\frac12  \sum_{\ell,m=1}^{2} [f_m,f_\ell](x_0) U^{\ell m}_A(t)\\
&\quad + \frac12 \sum_{\ell,m=1}^{2}\Big(Df_\ell(x_0)\cdot f_m(x_0)  + Df_m(x_0)\cdot f_\ell(x_0)\Big)U^{\ell m}_S(t).
\end{split}
\]
Therefore, recalling furthermore that $\displaystyle U^{\ell m}_S(t)= \frac12 U^{\ell}(t)U^m(t)$, one gets 
\begin{align}
h(y(t)) - h(x_0) 	&=\frac12 \sum_{\ell,m=1}^{2}\nabla h(x_0) \cdot	[f_m,f_\ell](x_0) U^{\ell m}_A(t)\nonumber\\
&\!\!\!\!\!\!\!\! +\frac14\sum_{\ell,m=1}^{2}\nabla h(x_0) \cdot\left(
		Df_\ell(x_0)\cdot f_m(x_0)+ 	Df_m(x_0)\cdot f_\ell(x_0)\right)U^{\ell}(t)U^m(t)\label{expr_h1}\\
&\!\!\!\!\!\!\!\!  +\frac 12\sum_{\ell,m=1}^{2}  
			D^2h(y(x_0)) f_m(x_0)f_\ell(x_0)
			U^{\ell}(t)U^m(t)\label{expr_h2}\\
& \!\!\!\!\!\!\!\!			+\sum_{\ell,m=1}^{2}\nabla h(x_0) \cdot \int_{0}^{t}
\left( \int_{0}^{s} \left(Df_\ell(y(\xi))\cdot f_m(y(\xi)) - Df_\ell(x_0)\cdot f_m(x_0))\right)u^m(\xi)\, d\xi\right)u^\ell(s)\dd s\nonumber  \\ 
&\!\!\!\!\!\!\!\!	 + 	\sum_{\ell,m=1}^{2}\int_0^t\left(\int_0^s  \Big(D^2h(y(\xi)) f_m(y(\xi))f_\ell(y(s))-D^2h(x_0)
 f_m(x_0)f_\ell(x_0)\Big) u^m(\xi) d\xi\right)u^\ell(s)  \dd s. \nonumber
\end{align}
 Notice that the sum of \eqref{expr_h1} and \eqref{expr_h2} is equal to
$\frac12 \mathbf{S}(x_0)(U(t))$, according to the definitions given in \eqref{equation: S+A italy}
and \eqref{def_Slm}.  Now we are going to use the assumptions that the quadratic form
$\mathbf{S}(x_0)$ is negative semidefinite or definite. In this case,
by the previous estimate we get, independently of the drop generator $r$ that we have chosen to define a rotational
control as in \eqref{eq:rot_controls},
\begin{equation}\label{stima1}	
h(y(t)) - h(x_0) \leq \nabla h(x_0) \cdot
	 [f_1,f_2](x_0) U^{2 1}_{A,\omega}(t) + \sum_{\ell,m=1}^{2}E_{\ell,m}(t),
\end{equation}
where
\begin{align*}
E_{\ell,m}(t)&:=\nabla h(x_0) \cdot \int_{0}^{t}\Big( \int_{0}^{s} \big(Df_\ell(y(\xi)) f_m(y(\xi)) - Df_\ell(x_0)
	 f_m(x_0)\big)u^m(\xi)\, d\xi\Big)u^\ell(s)\dd s  \\
& \qquad  +\int_0^t\left(\int_0^s  \Big(D^2h(y(\xi)) f_m(y(\xi))f_\ell(y(s))-D^2h(x_0) f_m(x_0)f_\ell(x_0)\Big) 
    u^m(\xi) d\xi\right)u^\ell(s)  \dd s. 
\end{align*}
If, instead, the quadratic form  $\mathbf{S}(x_0)$ is indefinite, by using a $\mathbf{S}(x_0)$-adapted  drop $r(t)$
as constructed, e.g., in Sect. \ref{pted_drop}
and in view  of \eqref{S<0}, once again we obtain the estimate \eqref{stima1}.

We turn now to estimating the error term $E_{\ell,m}$. For every $\ell,m=1,2$, let us set 	
\begin{equation}\label{g}	\begin{split}
g_{\ell m}(\xi) &:= \nabla h(x_0)\cdot \big(Df_\ell(y(\xi)) f_m(y(\xi)) -Df_\ell (x_0)  f_m(x_0)\big) \\
&\qquad\quad
+\Big(D^2h(y(\xi)) f_m(y(\xi))f_\ell(y(s))  -D^2h(x_0) f_m(x_0)f_\ell(x_0)\Big).
\end{split}
\end{equation}
If we restrict $g_{\ell m}$ to the compact interval $[0,T]$, recalling that we assumed that  
$h$,$\nabla h$, $D^2h$, $f_1,f_2$, $Df_1$ $Df_2$ are Lipschitz continuous (so that, in particular, 
$y(\cdot)$ is Lipschitz continuous as well), also $g_{\ell m}$ turns out to be $L$-Lipschitz continuous for some $L>0$. 
In particular, since $ g_{\ell m}(0)=0$, we have
\[
|g_{\ell m}(\xi)|\leq L\xi 
\quad\forall \xi\in [0,T].
\] 
Therefore, recalling that $u(t)=r(\omega t)$ $\forall t\in\mathbb{R}$, we get 
\begin{equation}\label{stimaE}\begin{split}
&\!\!\!\!\!\!\!\!\!\!\!\Big|\sum_{\ell,m=1}^{2}	E_{\ell,m}(t)\Big| 
\leq  \sum_{\ell,m=1}^{2} \bigg|\nabla h(x_0) \cdot\!\! \int_{0}^{t}\left( \int_{0}^{s} \left(Df_\ell(y(\xi)) f_m(y(\xi)) 
   - Df_\ell(x_0) f_m(x_0))\right)u^m(\xi)\dd \xi\right)u^\ell(s)\dd s \\
& \qquad   +\int_0^t\left(\int_0^s  \bigg(D^2h(y(\xi)) f_m(y(\xi))f_\ell(y(s))-D^2h(x_0) f_m(x_0)f_\ell(x_0)\bigg) 
   u^m(\xi) \dd\xi\right)u^\ell(s)  \dd s\bigg| \\
& =\sum_{\ell,m=1}^{2}\left|\int_{0}^{t}\left( \int_{0}^{s} g_{\ell m}(\xi) u^m(\xi) \dd\xi\right)u^\ell(s)  \dd s\right|\\
& \leq  \sum_{\ell,m=1}^{2}L\int_{0}^{t}\left( \int_{0}^{s}\xi\, |u^m(\xi)| \dd\xi\right)|u^\ell(s)|  \dd s           \\
&= \frac{L}{|\omega|^3} \sum_{\ell,m=1}^{2}\int_{0}^{|\omega| t}\left( \int_{0}^{|\omega| s}\xi\, |r^m(\xi)| \dd\xi\right)|r^\ell(s)|  \dd s\\
& =  \frac{L}{|\omega|^3}\mathbf{Exc}(R)_0^{|\omega| t}.\end{split}\end{equation}
Now, by \eqref{stima1} and \eqref{stimaE} we obtain
\begin{equation}\label{stima122} 
\begin{split}
 h(y_\omega(t)) - h(x_0) & \leq \nabla h(x_0) \cdot [f_1,f_2](x_0) U^{2 1}_{A,\omega}(t) 
      + \frac{L}{|\omega|^3}\mathbf{Exc}(R)_0^{|\omega| t}\\
&= \nabla h(x_0) \cdot [f_1,f_2](x_0) \frac{1}{\omega^2}\mathbf{Area}(R)_0^{\omega t} 
   + \frac{L}{|\omega|^3}\mathbf{Exc}(R)_0^{|\omega| t}
\end{split}
\end{equation}
Recalling, property (4) in Definition \ref{drop}, there exists a constant $C_r$, depending only on the drop $r$, such that
\[
\mathbf{Exc}(R)_0^{|\omega| t} \le C_r \big|\mathbf{Area}(R)_0^{\omega t}\big|.
\]
Therefore, recalling that from assumption \textbf{(H4)} we have $\big|  \nabla h(x_0) \cdot [f_1,f_2](x_0) \big| \ge \alpha>0$,
by choosing $|\omega|$ large enough (i.e., $\tau>0$ small enough) and recalling that we have set
$\hat\tau =\min\{\bar\tau,\bar t\}$ and we have chosen the sign of $\omega$ as in \eqref{sign_om}, we get 
\[h(y_\omega(t)) - h(x_0) \leq  \frac12 \nabla h(x_0) \cdot
[f_1,f_2](x_0) \frac{1}{\omega^2}\mathbf{Area}(R)_0^{\omega t}\quad \forall t\in[0,\hat \tau/|\omega| ] ,
\]
as soon as  $C_r\leq -\frac{\omega}{2L} \nabla h(x_0) \cdot
[f_1,f_2](x_0)$, in particular, as soon as 
\begin{equation}\label{omegagrande}
|\omega |> \frac{2LC_r}{\alpha}.
\end{equation}
Recalling that $|\omega|=\frac{\tau_0}{\tau}$, the proof is concluded.
\end{proof}
\begin{remark}
	{\rm 
If $\mathbf{S}(x_0)$ is positive (semi)definite,
the sum of \eqref{expr_h1} and \eqref{expr_h2}, that is $\frac12 \mathbf{S}(x_0)(U(t))$, is positive and quadratic 
with respect to $t$. 
Therefore, owing to \eqref{rough_est_area}, there is no way to compensate it through the area term.}
\end{remark}

\subsection{Construction of an interior trajectory}\label{subs:construction}
The present section is devoted to constructing a trajectory $y$ in a suitably small interval $[0,\tau]$ such that $h(y(t))<0$ for all
$t\in (0,\tau]$, with initial point in a suitable neighborhood of the singular set $\mathbb{S}$, 
using the estimates that were obtained in the previous section.

To state the next result, let $x_0\in\mathbb{S}$ and let $\delta >0$ be as given by \textbf{(H6)}. Observe that for each $x$ we can write
\[
\big(\nabla h(x)\cdot f_1(x),\nabla h(x)\cdot f_2(x)\big) = \big|\big(\nabla h(x)\cdot f_1(x),\nabla h(x)\cdot f_2(x)\big)\big|\,(\cos\psi_x,\sin\psi_x),
\] 
with $\psi_x$ locally uniquely and continuously, up to multiples of $2\pi$, determined by the above relation. 
Suppose that at $x_0 \in{\mathbb S}$ the quadratic form
$\mathbf{S}(x_0)$, as defined according to \eqref{def_Slm}, is negative definite or is indefinite.
Then, without loss of generality we may assume that $\mathbf{S}(x)$ is as well
negative definite, resp., indefinite for each $x\in x_0 +\delta\mathbb{B}$. Recalling now that the principal direction 
$\varphi_{\mathbf Q}$ and the half-amplitude $\beta_{\mathbf Q}$ of a quadratic form ${\mathbf Q}$ were defined in \eqref{formaQ},
choose $\beta = \beta (x_0)\in (0, \beta_{{\mathbf S}(x_0)})$ so that, up to possibly taking a smaller $\delta >0$,
\[
\cos\big(\varphi_{\mathbf{S}(x)} - \beta-\psi_{x}\big) \ge \eta_0 >0
\]
for any $x\in x_0+\delta\mathbb{B}$ and a suitable $\eta_0$ independent of $x$, and set 
\begin{equation}\label{def_kx}
k_x:= \begin{cases}1 &\text{in case \textbf{(H5)(i)},} \\ 
\cos\Big(\varphi_{\mathbf{S}(x)} - \beta-\psi_x\Big)&\text{in case \textbf{(H5)(ii)}}.
\end{cases}
\end{equation}
Observe that
\begin{equation}\label{kx>>0}
k_x \ge \eta_0 \quad\text{ for all }x\in x_0+\delta \mathbb{B}.
\end{equation}
Furthermore, for any drop generator $r$ with period $\tau_0$ and any $\omega>1$, let us set 
\begin{equation}\label{def_tauxr}
 \tau_{x,r} :=\min \left\{ \frac{2\eta_0 d_0} {3L}\, ,\,
 \tau_0\right\},
\end{equation}
 where $L$ and $\tau_0$ are as in Definition \ref{drop}, and 
 \begin{equation}\label{def_omxr}
 \bar\omega_{x,r} = \max_{x\in x_0+\varepsilon\mathbb{B}} \frac{2L C}{\big|\nabla h(x) \cdot
 	[f_1,f_2](x)\big|}.
 \end{equation}
\begin{proposition}\label{prop: feasible trajectory}
Let the assumptions \textbf{(H1)}--\textbf{(H6)} hold and let $x_0\in \mathbb{S}$.
For a suitable $\varepsilon>0$ and  for every $x \in x_0 + \varepsilon \B$, there exists  a drop generator $r$ (that possibly
depends on $x$)  
such that, setting $u(t) :=r(\omega t)$ and using $y$ to denote the corresponding solution such that $y(0)=x$, we get
\begin{equation}\label{stima777}h(y(t)) - h(x) \leq  
	 \frac12\nabla h(x) \cdot [f_1,f_2](x) \frac{\mathbf{Area}(R)_0^{\omega t}}{\omega^2},
\end{equation}
for all $|\omega|\geq \bar\omega_{x,r}$ and all $t\in [0,\tau_{x,r}/|\omega|]$, 
provided $\mathrm{sgn}(\omega) = - \mathrm{sgn}\big(\nabla h(x_0) \cdot[f_1,f_2](x_0)\big)$.
\end{proposition}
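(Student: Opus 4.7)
The plan is to repeat the Taylor-type expansion of the proof of Proposition \ref{proposition: estimate on y in terms of h} but now centered at the generic base point $x$ rather than at $x_0\in\mathbb{S}$. The crucial new feature is that, since $x$ may not lie in $\mathbb{S}$, the first-order term $\sum_\ell \nabla h(x)\cdot f_\ell(x)\,U^\ell(t)$ no longer vanishes, and a substantial part of the argument will be devoted to taming it.

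First, I would use continuity of all the data to shrink $\varepsilon$ so that for each $x\in x_0+\varepsilon\mathbb{B}$: the sign-type of $\mathbf{S}(x)$ agrees with that of $\mathbf{S}(x_0)$ (so the case of \textbf{(H5)} is preserved); one has $|\nabla h(x)\cdot[f_1,f_2](x)|\geq\alpha$; the principal direction $\varphi_{\mathbf{S}(x)}$ and half-amplitude $\beta_{\mathbf{S}(x)}$ vary continuously and $k_x\geq\eta_0>0$; and $d_{\mathbb{S}}(x)\leq\varepsilon$. For each such $x$, I would pick a pointed drop generator $r$ of period $\tau_0$, phase $\pm\varphi_{\mathbf{S}(x)}$ and half-amplitude $\beta(x_0)<\beta_{\mathbf{S}(x)}$ of the type built in Section \ref{constr_drops}, so that $r$ is adapted to $\mathbf{S}(x)$ and the area lower bound $\mathbf{Area}(R)_0^s\geq C s^3$ is available for small $s$. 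Repeating the chain of integrations by parts from the proof of Proposition \ref{proposition: estimate on y in terms of h} then gives
\[
h(y(t))-h(x) = v\cdot U(t)+\tfrac12\mathbf{S}(x)(U(t))+\nabla h(x)\cdot [f_1,f_2](x)\frac{\mathbf{Area}(R)_0^{\omega t}}{\omega^2}+E(t),
\]
with $v:=(\nabla h(x)\cdot f_1(x),\nabla h(x)\cdot f_2(x))$, $U(t)=R(\omega t)/\omega$, and $|E(t)|\leq L|\omega|^{-3}\mathbf{Exc}(R)_0^{|\omega|t}$, exactly as in \eqref{stimaE}.

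Next I would handle each term. Since $r$ is adapted to $\mathbf{S}(x)$, $U(t)$ lies in the cone where $\mathbf{S}(x)$ is non-positive, hence $\mathbf{S}(x)(U(t))\leq 0$. With $\mathrm{sgn}(\omega)=-\mathrm{sgn}\big(\nabla h(x_0)\cdot [f_1,f_2](x_0)\big)$ and $|\omega|\geq\bar\omega_{x,r}$, the excess-over-area inequality of Definition \ref{drop}(4) absorbs $E(t)$ into, say, a quarter of the bracket contribution, exactly as in the proof of Proposition \ref{proposition: estimate on y in terms of h}. The genuinely new ingredient is the control of the first-order term $v\cdot U(t)$: by Lipschitz continuity of $\nabla h\cdot f_\ell$ (which vanishes on $\mathbb{S}$) one has $|v|\leq L'\,d_{\mathbb{S}}(x)$, while $|U(t)|\leq t$ since $|\dot R|=1$. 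The specific threshold $\tau_{x,r}=\min\{2\eta_0 d_0/(3L),\tau_0\}$ is designed precisely so that on the time window $[0,\tau_{x,r}/|\omega|]$ this first-order contribution is dominated by the remaining portion of the bracket term, which behaves like $|\omega|\,t^3$ thanks to the area lower bound.

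The main difficulty lies in this last step: reconciling a first-order contribution of size at most $d_{\mathbb{S}}(x)\cdot t$ with a bracket contribution of size $\sim|\omega|\,t^3$, \emph{uniformly} for $x\in x_0+\varepsilon\mathbb{B}$ and $|\omega|\geq\bar\omega_{x,r}$. The inequality $k_x\geq\eta_0>0$ governs the direction in which $U(t)$ moves relative to $v$, assumption \textbf{(H6)} gives the correct quantitative scaling of $|v|$ in terms of $d_{\mathbb{S}}(x)$, and the restriction $t\leq\tau_{x,r}/|\omega|$ provides the time window in which the cubic growth of the area overtakes the linear growth of the first-order term. The explicit ratio $2\eta_0 d_0/(3L)$ entering $\tau_{x,r}$ encodes exactly the balance between these three ingredients, and verifying that this balance yields the sharp inequality \eqref{stima777} is the technical heart of the proof.
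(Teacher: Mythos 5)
Your outline reproduces the paper's expansion correctly (the identity for $h(y(t))-h(x)$ with the terms $v\cdot U(t)$, $\tfrac12\mathbf{S}(x)(U(t))$, the area term, and an error absorbed via Definition \ref{drop}(4)), but the way you propose to handle the first-order term is the step that fails, and it is precisely the heart of the paper's argument. You bound $|v\cdot U(t)|\le L'\,d_{\mathbb{S}}(x)\,t$ and claim this is dominated on $[0,\tau_{x,r}/|\omega|]$ by the bracket term of size $\sim|\omega|t^3$. No such domination is possible: as $t\to0^+$ the left-hand side of \eqref{stima777} behaves like $\big(v\cdot r(0)\big)t+O(t^2)$ while the right-hand side is negative of order $|\omega|t^3$, so the inequality fails for small $t$ whenever $v\cdot r(0)>0$; moreover the comparison $d_{\mathbb{S}}(x)\,t\lesssim|\omega|t^3$ cannot hold uniformly for all $|\omega|\ge\bar\omega_{x,r}$ and all $x$ in a fixed ball. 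The paper does not dominate this term: it kills it by choosing the drop's initial direction. In case \textbf{(H5)(i)} the phase is fixed by \eqref{drop (i)}, i.e.\ $r(0)=-v/|v|$, and in case \textbf{(H5)(ii)} the sign of the phase and the half-amplitude ($\beta_{\mathbf{S}}$ versus $\beta_{\mathbf{S}}/2$, see \eqref{drop (ii) 1}--\eqref{drop (ii) 2}) are selected so that $v\cdot r(0)\le -k_x|v|$ with $k_x\ge\eta_0$ as in \eqref{def_kx}. Combined with \textbf{(H6)} — which is a \emph{lower} bound $|v|\ge d_0 d_{\mathbb{S}}(x)$, not the upper Lipschitz bound you invoke — this yields \eqref{stima222}: the first-order term is $\le d_{\mathbb{S}}(x)\,t\,(-k_x d_0+\tfrac{L}{2}\omega t)$, hence nonpositive on the stated time window. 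So the term is not an error to be beaten by the cubic area term; it is an inward drift that the construction manufactures, and your proposal never ties the choice of $r(0)$ to the direction of $v$.

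There is a second gap in case \textbf{(H5)(i)}: you shrink $\varepsilon$ so that "the sign-type of $\mathbf{S}(x)$ agrees with that of $\mathbf{S}(x_0)$", but negative semidefiniteness is not an open condition ($\mathbf{S}$ vanishes on $\mathbb{S}$ in the nonholonomic-integrator examples, and nearby $\mathbf{S}(x)$ may well have a positive part); also $\varphi_{\mathbf{S}(x)}$, to which you anchor the phase, need not even be defined there. The paper's remedy is to adapt the drop to $\mathbf{S}(y_0)$ at the nearest singular point $y_0$ and to pay the error $\big|\big(\mathbf{S}(x)-\mathbf{S}(y_0)\big)(U(t))\big|=O\big(d_{\mathbb{S}}(x)\,\omega t^2\big)$, which is then absorbed by the strictly negative first-order term described above (see \eqref{est_H5i}); this is exactly where the threshold $2\eta_0 d_0/(3L)$ in $\tau_{x,r}$ and the full strength of \textbf{(H6)} are used. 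Without the directional choice of $r(0)$ and this compensation mechanism, the proof does not go through.
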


\begin{remark}
{\rm One can argue from the proof that the statement can be made more precise by saying that the drop 
$r$ can be chosen arbitrarily if hypothesis \textbf{(H5)(i)} %or {\bf (H5)(ii)} are 
is assumed, while, if one posits \textbf{(H5)(ii)}, the choice of 
the phase and half-amplitude of $r$ depends (continuously) on $x$. Actually they depend in a simple way on the phase 
$\varphi_{\mathbf{S}(x)}$ and the half-amplitude $\beta_{\mathbf{S}(s)}$ of the quadratic form $\mathbf{S}(x)$ 
(see points A) and B) of the proof).}
\end{remark}
As in the case of Proposition  \ref{proposition: estimate on y in terms of h}, we can immediately deduce a corollary 
of Proposition  \ref{prop: feasible trajectory} in the case where the primitive drop $R$ is constructed as in 
Section \ref{constr_drops}:
\begin{corollary} 
If  in  Proposition \ref{prop: feasible trajectory} we choose a drop-generator verifying $\mathbf{Area}(R)_0^s\geq Cs^3$ 
$\forall s\in[0,\bar t]$ for some $C,\bar t >0$, and choose the sign of $\omega$ as in \eqref{sign_om},
then \eqref{stima777} becomes  
\begin{equation}\label{stima77} 
h(y(t)) - h(x) \leq  C\frac12 \nabla h(x)\cdot [f_1,f_2](x)\, \omega t^3\quad \forall t\in[0,\tau/|\omega|] ,
\end{equation}
where  $\tau\ge\hat\tau:=\min\{\tau_{x,r},\bar t\}$ and $|\omega|\geq\bar\omega_{x,r}$. 
In particular, setting  $|\omega|= \tau_0 \tau^{-1}$ and keeping the same choice of $\mathrm{sgn}(\omega)$ as above, 
\eqref{stima7} implies
\begin{equation}\label{stima88} 
		h(y_\omega(t)) - h(x) \leq  
			-\alpha(x) \frac{t^3}{\tau}\quad \forall t\in[0,\tau] ,
\end{equation}
with $\alpha(x) := \frac{C\, \tau_0}{2} \big| \nabla h(x) \cdot [f_1,f_2](x)\big|.$
\end{corollary}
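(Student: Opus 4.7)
The plan is to obtain this corollary as an immediate consequence of Proposition \ref{prop: feasible trajectory} by substituting, into the estimate \eqref{stima777}, the explicit cubic lower bound on the area that is available for the drops constructed in Section \ref{constr_drops}. No new estimate is needed; the work is essentially algebraic, and the only delicate point is careful sign bookkeeping.

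First I would specialize the drop generator $r$ supplied by Proposition \ref{prop: feasible trajectory} to be one of the circle-like or pointed drops of Section \ref{constr_drops}, adapted to the quadratic form $\mathbf{S}(x)$ near $x_0$ in accordance with which case of hypothesis \textbf{(H5)} holds. For these explicit drops the primitive $R$ satisfies $\mathbf{Area}(R)_0^s \ge C s^3$ on $[0,\bar t]$. In order to be in the regime where this cubic bound is in force, I would further shrink the admissible time interval coming from the Proposition by intersecting it with $[0,\bar t/|\omega|]$, i.e., working on $[0,\hat\tau/|\omega|]$ with $\hat\tau := \min\{\tau_{x,r},\bar t\}$, so that $\omega t$ always lies in $[0,\bar t]$.

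Next I would plug the cubic area bound into \eqref{stima777}. Hypothesis \textbf{(H4)} together with continuity of $\nabla h \cdot [f_1,f_2]$ near $x_0 \in \mathbb{S}$ ensures that the quantity $\nabla h(x) \cdot [f_1,f_2](x)$ has constant sign in a neighborhood of $x_0$, and the sign convention \eqref{sign_om} forces $\omega$ to have the opposite sign. Under the normalization $\omega>0$ the scalar $\tfrac12\nabla h(x)\cdot [f_1,f_2](x)$ is therefore negative, and multiplying the area inequality $\mathbf{Area}(R)_0^{\omega t}/\omega^2 \ge C\omega t^3$ by it reverses the direction, delivering exactly \eqref{stima77}. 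The alternative case $\omega<0$ is handled symmetrically, exploiting the fact that the explicit drops of Section \ref{constr_drops} satisfy the analogous inequality $\mathbf{Area}(R)_0^{\omega t}/\omega^2 \le C\omega t^3 < 0$ for $\omega t<0$, which follows directly from the odd-near-origin behavior of the area in those formulas.

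Finally, \eqref{stima88} is obtained from \eqref{stima77} by setting $|\omega| = \tau_0/\tau$ and using the sign prescription \eqref{sign_om} once more: the product $\nabla h(x) \cdot [f_1,f_2](x)\cdot\omega$ collapses to $-|\nabla h(x) \cdot [f_1,f_2](x)|\,\tau_0/\tau$, so the right-hand side of \eqref{stima77} becomes
\[
\frac{C\tau_0}{2}\,\nabla h(x)\cdot[f_1,f_2](x)\,\mathrm{sgn}(\omega)\,\frac{t^3}{\tau} \;=\; -\frac{C\tau_0}{2}\,\bigl|\nabla h(x)\cdot[f_1,f_2](x)\bigr|\,\frac{t^3}{\tau} \;=\; -\alpha(x)\,\frac{t^3}{\tau}.
\]
The main (mild) obstacle is the sign analysis for the $\omega<0$ branch in the preceding step: here the abstract Definition \ref{drop} does not by itself say anything about $\mathbf{Area}(R)_0^s$ for $s<0$, so it is essential to appeal to the explicit constructions of Section \ref{constr_drops}, where the cubic behavior of the area is transparently odd in $s$ near the origin.
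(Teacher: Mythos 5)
Your proposal is correct and follows essentially the same route as the paper, which treats this corollary as an immediate substitution of the cubic area bound $\mathbf{Area}(R)_0^s\ge Cs^3$ into \eqref{stima777} followed by the sign bookkeeping dictated by \eqref{sign_om} and $|\omega|=\tau_0/\tau$. Your extra observation --- that for the branch with $\omega t<0$ one needs the odd-near-origin behavior of the area, which holds for the explicit drops of Section \ref{constr_drops} (by symmetry/periodicity) but is not guaranteed by Definition \ref{drop} alone --- is a valid point of care that the paper leaves implicit, and it does not change the argument.
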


\begin{proof}[Proof of Proposition \ref{prop: feasible trajectory}.]
We will consider only the case where $\nabla h(x) \cdot [f_1,f_2](x)<0$. The opposite case can be treated symmetrically.
	
Two scenarios should be considered. The first one is when the initial condition $x\in \mathbb{S}$. 
Then, thanks to our assumptions \textbf{(H1)}--\textbf{(H5)},
the very same construction as in Proposition \ref{proposition: estimate on y in terms of h} can be used.
The second case, that we now examine, is when $x$ belongs to an open neighborhood of $x_0$ but not to $\mathbb{S}$.

Let now the initial point of the trajectory $y(.)$ be $x \in x_0 + \varepsilon \B$ , for some $0< \varepsilon \le 
\delta$, that we will determine later.  We consider the case where 
\begin{equation}\label{stima0}
\displaystyle\Big(\nabla h(x)\cdot f_1(x),\nabla h(x)\cdot f_2(x)\Big)\neq  (0,0)
\end{equation}
i.e., $x\not\in \mathbb{S}$.
Let us select a rotational  control by distinguishing the case \textbf{(H5)(i)} from the case \textbf{(H5)(ii)}.

When $x_0$ satisfies \textbf{(H5)(i)}, let us  choose any  drop generator $r=r_x=(r^1,r^2)$ with a phase 
$\varphi\in[0,2\pi)$ and half-amplitude $\beta\in (0,\pi/2]$ such that
\begin{equation}\label{drop (i)}
\displaystyle\sum_{\ell=1}^{2}\nabla h(x)\cdot f_\ell(x)r^\ell(0) 
         = \min_{|(v_1,v_2)|=1} \sum_{\ell=1}^{2}\nabla h(x)\cdot f_\ell(x)v^\ell.
\end{equation}
Since $r(0) = (\cos(\varphi-\beta),\sin(\varphi-\beta))$ this means that
\[(\cos(\varphi-\beta),\sin(\varphi-\beta)) = 
-\frac{(\nabla h(x)\cdot f_1(x),\nabla h(x)\cdot f_2(x)\big)}{\big|\big(\nabla h(x)\cdot f_1(x),\nabla h(x)\cdot f_2(x)\big)\big|}
\]
Let us observe that if we fix $\beta$ then $\varphi $ is uniquely determined up to multiples of $2\pi$ and, owing to \textbf{(H6)},
\begin{equation}\label{stima32}
\sum_{\ell=1}^{2}\nabla h(x)\cdot f_\ell(x)r^\ell(0) = - \big|\big(\nabla h(x)\cdot f_1(x),\nabla h(x)\cdot f_2(x)\big)\big|\leq -d_0 d_{\mathbb{S}}(x).
\end{equation}
In the case where $x_0$ satisfies \textbf{(H5)(ii)},  (i.e., we can assume that $x$ belongs to a $\delta$-neighborhood of $x_0$ 
where the quadratic form $\mathbf{S}(x)$ is indefinite), 
let us choose a drop generator $r$ with phase $\varphi$ and half-amplitude $\beta$ determined as follows:
\begin{itemize}
\item[A)] if $\Big(\nabla h(x)\cdot f_1(x),\nabla h(x)\cdot  f_2(x)\Big)\cdot  \begin{pmatrix}\cos\left(\varphi_{\mathbf{S}(x)}-\beta_{\mathbf{S}(x)}\right)\\\sin\left(\varphi_{\mathbf{S}(x)}-\beta_{\mathbf{S}(x)}\right)\end{pmatrix}\neq 0$, we set
 \begin{equation}\label{drop (ii) 1}
 (\varphi,\beta):=\left\{\begin{array}{l} \displaystyle\left(\varphi_{\mathbf{S}(x)},\beta_{\mathbf{S}(x)}\right)   \quad\text{if}\quad
	\Big(\nabla h(x)\cdot f_1(x),\nabla h(x)\cdot  f_2(x)\Big)\cdot  \begin{pmatrix}\cos\left(\varphi_{\mathbf{S}(x)}-\beta_{\mathbf{S}(x)}\right)\\\sin\left(\varphi_{\mathbf{S}(x)}-\beta_{\mathbf{S}(x)}\right)\end{pmatrix} < 0\\\\
\displaystyle\left(-\varphi_{\mathbf{S}(x)},\beta_{\mathbf{S}(x)}\right) \quad\text{if}\quad	
\Big(\nabla h(x)\cdot f_1(x),\nabla h(x)\cdot  f_2(x)\Big)\cdot  
\begin{pmatrix}\cos\left(\varphi_{\mathbf{S}(x)}-\beta_{\mathbf{S}(x)}\right)\\\sin\left(\varphi_{\mathbf{S}(x)}-\beta_{\mathbf{S}(x)}\right)\end{pmatrix}  > 0;	   \end{array} \right.
\end{equation}
\item[B)] if, instead,   $\Big(\nabla h(x)\cdot f_1(x),\nabla h(x)\cdot  f_2(x)\Big)\cdot  \begin{pmatrix}\cos\left(\varphi_{\mathbf{S}(x)}-\beta_{\mathbf{S}(x)}\right)\\\sin\left(\varphi_{\mathbf{S}(x)}-\beta_{\mathbf{S}(x)}\right)\end{pmatrix} = 0$, we set
\begin{equation}\label{drop (ii) 2}
(\varphi,\beta):=\left\{\begin{array}{l} \displaystyle\left(\varphi_{\mathbf{S}(x)},\frac12\beta_{\mathbf{S}(x)}\right)   
\quad\text{if}\quad\Big(\nabla h(x)\cdot f_1(x),\nabla h(x)\cdot  f_2(x)\Big)\cdot  
\begin{pmatrix}\cos\left(\varphi_{\mathbf{S}(x)}-\beta_{\mathbf{S}(x)}/2\right)\\\sin\left(\varphi_{\mathbf{S}(x)}-\beta_{\mathbf{S}(x)}/2\right)\end{pmatrix} < 0\\
\displaystyle\left(-\varphi_{\mathbf{S}(x)},\frac12\beta_{\mathbf{S}(x)}\right)   
\quad\text{if}\quad\Big(\nabla h(x)\cdot f_1(x),\nabla h(x)\cdot  f_2(x)\Big)\cdot  
\begin{pmatrix}\cos\left(\varphi_{\mathbf{S}(x)}-\beta_{\mathbf{S}(x)}/2\right)\\\sin\left(\varphi_{\mathbf{S}(x)}-\beta_{\mathbf{S}(x)}/2\right)\end{pmatrix}  > 0.\end{array}\right. 
\end{equation}
\end{itemize}
With these choices, we  obtain, in all cases,
\begin{equation}\label{stima2}
\sum_{\ell=1}^{2}\nabla h(x)\cdot f_\ell(x)r^\ell(0)  =-  k_x\big|\big(\nabla h(x)\cdot f_1(x),\nabla h(x)\cdot f_2(x)\big)\big|
         \leq -k_x\, d_0d_{\mathbb{S}}(x),
\end{equation}
where $k_x$ is as in \eqref{def_kx}.
In particular, taking the control $u(t) := r(\omega t)$ for any $\omega$, using the notation of Section \ref{sec:rot_contr} and 
owing on \textbf{(H6)}, we get
\begin{equation}\label{stima222}\begin{split}
\sum_{\ell=1}^{2}\nabla h(x)\cdot f_\ell(x)U^\ell(t)  
    &= \frac{1}{\omega}\int_0^{\omega t}\sum_{\ell=1}^{2}\nabla h(x) \cdot f_\ell(x)r^\ell(\xi)\dd\xi\\
   & = \frac{1}{\omega}\int_0^{\omega t}\sum_{\ell=1}^{2}\nabla h(x)\cdot f_\ell(x)\left(r^\ell(0) 
           +\int_0^\xi \dot{r}^\ell (s)\dd s \right)\dd\xi \\
&\leq \sum_{\ell=1}^{2}\nabla h(x) \cdot f_\ell(x) r^\ell(0)\, t +  \frac{L\omega }{2}\big|(\nabla h(x)\cdot f_1(x),\nabla h(x)\cdot f_2(x))\big|\, t^2\\
&\leq d_{\mathbb{S}}(x) \, t\, \Big(- k_x d_0+ \frac{L }{2}\omega t\Big).
\end{split}
\end{equation}
Using the same rotational controls, we also obtain
	\begin{align*} 
		&\!\!\!\!\!\!\!\!\!\!\!\!\!\! h(y(t)) - h(x) = \sum_{\ell=1}^{2}\int_{0}^{t}  \nabla h(y(s)) \cdot f_\ell(y(s)) u^\ell(s) \dd s  \\ 
		&= \sum_{\ell=1}^{2}\nabla h(x)\cdot f_\ell(x)U^\ell(t)  +\sum_{\ell=1}^{2}\int_{0}^{t}  \Big(\nabla h(y(s)) \cdot f_\ell(y(s))-\nabla h(x) \cdot f_\ell(x)\Big) u^\ell(s) \dd s  \\ 	
		&=\sum_{\ell=1}^{2}\nabla h(x)\cdot f_\ell(x)U^\ell(t)  + \sum_{\ell,m=1}^{2}\nabla h(x) \cdot \int_{0}^{t}\left( \int_{0}^{s} Df_\ell(y(\xi)) f_m(y(\xi))u^m(\xi)\, d\xi\right)u^\ell(s)\dd s \\ 
		&\qquad\qquad\qquad	+
		\sum_{\ell=1}^{2}\int_{0}^{t}\left( \int_{0}^{s}   \frac{\dd}{\dd\xi} \nabla h(y(\xi)) \dd\xi\right)f_\ell(y(s))u^\ell(s) \dd s\\
			&= \sum_{\ell=1}^{2}\nabla h(x)\cdot f_\ell(x)U^\ell(t)  +\sum_{\ell,m=1}^{2}\nabla h(x) \cdot
			Df_\ell(x) f_m(x) U^{\ell m}(t) \\
			&\qquad
			+\sum_{\ell,m=1}^{2}\nabla h(x) \cdot \int_{0}^{t}\left( \int_{0}^{s} \big(Df_\ell(y(\xi)) f_m(y(\xi)) - Df_\ell(x) f_m(x)\big)u^m(\xi)\, d\xi\right)u^\ell(s)\dd s \\
			&\qquad  
			+	\sum_{\ell,m=1}^{2}\int_{0}^{t}\left( \int_{0}^{s}   D^2h(y(\xi)) f_m(y(\xi))u^m(\xi) d\xi\right)f_\ell(y(s))u^\ell(s) \dd s .
		\end{align*}
Since  	
\[
\displaystyle\sum_{\ell,m=1}^{2} 
		Df_\ell(x)f_m(x) U^{\ell m}(t) = \sum_{\ell,m=1}^{2} \bigg[\frac12 [f_m,f_\ell](x) U^{\ell m}_A(t) + \frac12\Big(Df_\ell(x) f_m(x) + Df_m(x)f_\ell(x)\Big)U^{\ell m}_S(t)\bigg]
\]
and $ \displaystyle U^{\ell m}_S(t)= \frac12 U_{\ell}(t)U_{\ell }(t)$, as we computed in Section \ref{sec:rot_contr}, 
and recalling the definition of the quadratic form ${\mathbf S}$ that was given in \eqref{def_Slm}, one gets, for all 
$\displaystyle
|\omega| \geq \frac{2LC}{\big|\nabla h(x_0) \cdot [f_1,f_2](x_0)\big|}$ and $t\leq \tau_0/|\omega|$, with
$\mathrm{sgn}(\omega)$ as in \eqref{sign_om},
\begin{equation}\label{stima1222}		
\begin{split}
&\!\!\!\!\!\!\!\! h(y(t)) - h(x) = \sum_{\ell,m=1}^{2}\nabla h(x) \cdot
				\frac12 [f_m,f_\ell](x) U^{\ell m}_A(t)+\sum_{\ell=1}^{2}\nabla h(x)\cdot f_\ell(x)U^\ell(t) + \frac 12 \mathbf{S}(x)\big(U(t),U(t)\big)\\
			&\;	\qquad	+\sum_{\ell,m=1}^{2}\int_0^t\!\!\int_0^s \nabla h(x) \cdot  \Big(Df_\ell(y(\xi)) f_m(y(\xi)) - Df_\ell(x) f_m(x)\Big) u^m(\xi) u^\ell(s)\dd\xi\dd s 	\\
&\;\qquad  + 	\sum_{\ell,m=1}^{2}\int_0^t\!\!\int_0^s  \Big(D^2h(y(\xi)) f_m(y(\xi))f_\ell(y(s))-D^2h(x) f_m(x)f_\ell(x)\Big) u^m(\xi) u^\ell(s)
      \dd\xi   \dd s\\
			&\quad\leq  \nabla h(x) \cdot
				[f_1,f_2](x) U^{2 1}_A(t)+\sum_{\ell=1}^{2}\nabla h(x) f_\ell(x)U^\ell(t)	+  \mathbf{S}(x)\big(U(t),U(t)\big) ,
\end{split}
\end{equation}
where the last inequality has been obtained arguing as in \eqref{stima1}-\eqref{omegagrande}, with $x$ in place of $x_0$.
In view of estimates \eqref{stima222} and \eqref{stima1222}, 
we then obtain, for all $\displaystyle	\omega \geq  \bar\omega_{r,x}$ and $t\leq \tau_0/\omega$,
\[	
h(y(t)) - h(x) \leq 
		 \nabla h(x)\cdot
		[f_1,f_2](x) U^{2 1}_A(t)+ d_{\mathbb{S}}(x) \, t\, \Big(-\eta_0 d_0+ \frac{L }{2}\omega t\Big)+\mathbf{S}(x)\big(U(t),U(t)\big) .
\]
In case $x_0$ verifies \textbf{(H5)(ii)}, by our choice of the drop generator $r$, we have 
$\mathbf{S}(x)\big(U(t),U(t)\big)\leq 0$ for all $t\in\mathbb{R}$.
Therefore we get, for every $\displaystyle
|\omega| \geq  \bar\omega_{r,x}$ and $t\leq \min\big\{\frac{\tau_0}{|\omega|}, \frac{2\eta_0 d_0} {L\omega}\big\}$,
\[
h(y(t)) - h(x) \leq  \nabla h(x) \cdot
		[f_1,f_2](x) U^{2 1}_A(t) = \nabla h(x) \cdot [f_1,f_2](x) \frac{\mathbf{Area}(R)_0^{\omega t}}{\omega^2},
\]
i.e. the thesis  is proved.

In the (more delicate) case where $x_0$ verifies \textbf{(H5)(i)}, let $y_0\in\mathbb{S}$ be such that $d_{\mathbb S}(x)=|x-y_0|$.  There are two possibilities: 1) $y_0$ satisfies
\textbf{(H5)(i)}, and then we choose a drop $r$ as in \eqref{drop (i)} with $y_0$ in place of $x$; or, 2), $y_0$ satisfies \textbf{(H5)(ii)},
and we choose a drop satisfying \eqref{drop (ii) 1} or \eqref{drop (ii) 2} with $y_0$ in place of $x$, taking 
into account if either A) or B) are in place. 
In both cases $\mathbf{S}(y_0)\big(U(t),U(t)\big)\leq 0$ for all  $t\in\mathbb{R}$. Thus, recalling \eqref{stima222},
\begin{equation}
\begin{split}\label{est_H5i}
\sum_{\ell=1}^{2}\nabla h(x) f_\ell(x)U^l(t)	+  \mathbf{S}(x)\big(U(t),U(t)\big) 
         &\leq  d_{\mathbb{S}}(x) \, t\, \Big(- k_x d_0+ \frac{L }{2}\omega t\Big)
           + \left|\big(\mathbf{S}(x)-\mathbf{S}(y_0)\big)\big(U(t),U(t)\big)\right|  \\
& \leq  d_{\mathbb{S}}(x) \, t\, \Big(- k_x d_0+ \frac{L }{2}\omega t\Big)  + L d_{\mathbb{S}}(x) \omega t^2\\
&\le d_{\mathbb{S}}(x) \, t\, \Big(- \eta_0 d_0+ \frac32 L \omega t\Big) ,
\end{split}
\end{equation}
where the last inequality follows from \eqref{kx>>0}.
Hence, for all $\displaystyle
|\omega| \geq \bar\omega_{x,r}$ and $t\leq \min\big\{\frac{\tau_0}{|\omega|}, \frac{2\eta_0 d_0} {3L|\omega|}\big\}$,
\[
h(y(t)) - h(x) \leq  
\frac12\nabla h(x) \cdot
[f_1,f_2](x) U^{1 2}_A(t) = \frac12\nabla h(x) \cdot
[f_1,f_2](x) \frac{\mathbf{Area}(R)_0^{\omega t}}{\omega^2},
\]
namely the thesis is proved in this case as well.
\end{proof}
\begin{remark}
{\rm If $\mathbf{S}(x_0)$ is positive semidefinite, in particular vanishes (as it happens, e.g., 
for the nonholonomic integrator
with a parabolic constraint), it is clear from \eqref{est_H5i} that the assumption \textbf{(H6)} is crucial in order 
to compensate
the error term $\left|\big(\mathbf{S}(x)-\mathbf{S}(y_0)\big)\big(U(t),U(t)\big)\right|$. On the contrary, if 
$\mathbf{S}(x_0)$ is indefinite or negative definite at all $x_0\in \mathbb{S}$, then an inspection into \eqref{stima222}
shows that the assumption \textbf{(H6)} may be weakened (the linearity w.r.t. $d_\mathbb{S}$ is no longer necessary).}
\end{remark}
\subsection{Proof of Theorem \ref{theorem:NFT}}\label{subs:completion} 
Let $(\bar{x},\bar{u})$ be a reference process with initial condition ${x}_0$ such that $h({x}_0) \le 0$.
Recall that the time $\tau_1$ was defined at \eqref{eq: expression of the violating time}.
\begin{proof}[Proof of Theorem \ref{theorem:NFT}.]
We will construct first the trajectory $y$ satisfying \eqref{estimate: on first interval} and \eqref{estimate: on second interval} provided the final time $T$ is small enough (independently of the initial condition ${x}_0$). Next we will prove
estimates \eqref{eq:nonlinest} and \eqref{W11est} and, finally, we will remove the additional assumption on $T$.

\medskip

By compactness (of a suitable neighborhood of the reference trajectory $\bar{x}$) the numbers $\varepsilon$, constructed in
Proposition \ref{prop: feasible trajectory}, and $\bar{\tau}_{x,r}$, resp. $\bar{\omega}_{x,r}$, defined at \eqref{def_tauxr},
resp. \eqref{def_omxr}, are independent of the initial point $x_0$ that was considered there.
Let
\[
\bar{x}_1:=\bar{x}(\tau_1).
\]
Two cases may occur:
\begin{itemize}
\item[(a)] $d_{\mathbb{S}}(\bar{x}_1)\ge \varepsilon$;
\item[(b)] $d_{\mathbb{S}}(\bar{x}_1)< \varepsilon$.
\end{itemize}

\textit{Case} (a). In this case the first order IPC is satisfied in a neighborhood of $\bar{x}_1$, and it is well known
that the trajectory $y$ with the required properties can be constructed (see, e.g., \cite{Rampazzo_Vinter_1999} or \cite{Rampazzo_Vinter_2000}). Indeed, $T$ can be chosen to be small enough, uniformly with respect to the initial condition $x_0$
that belongs to a compact set, so that the trajectories of interest remain bounded away from $\mathbb{S}$.

\smallskip

\textit{Case} (b). On the time interval $[0, T ]$, we consider the control $(w_1 , w_2)$ defined as 
\begin{align}\label{eq:choice_controls}&   w_i(t) := \bar u_i(t) \chi_{[0,\tau_1)}(t) + 
u^{\omega}_i(t)  \chi_{[\tau_1,\tau_1+\tau(d)]}(t) + \bar u_i\big(t-\tau(d)\big)\chi_{(\tau_1+\tau(d),T]}(t),\quad i=1,2
\end{align}
where $u^{\omega}_i(\cdot )$ are the controls as defined in (\ref{eq:rot_controls}), where $r$, $\omega$, and $\tau(d) \ge 0$ are
to be specified later  ---in particular,  
$\tau(\cdot)$ is  continuous and verifies $\tau(0)=0$ and $\tau(d) >0$ for $d>0$.

Let $y(\cdot)$ be the trajectory corresponding to the control $(w_1 , w_2)$.
Owing to the definition of $\tau_1$, inequality \eqref{estimate: on first interval} is automatically satisfied on $[0,\tau_1]$,
since $y(t)= \bar x(t)$ in that interval. Set now $\bar{x}_1:=\bar{x}(\tau_1)$ and take the drop generator $r$ and 
$\bar\tau:=\tau_{r,\bar{x}_1}$ as in Proposition \ref{prop: feasible trajectory}.
Then, taking also $\omega$ as in Proposition \ref{prop: feasible trajectory},
inequality \eqref{stima88} shows that $h(y(t)) < 0$ on $(\tau_1,\tau_1+\bar\tau]$. 
Moreover, by \eqref{eq: equality for the feasible trajectory} the trajectory $y(.)$ in $[\tau_1,\tau_1+\bar\tau]$, with $y(\tau_1)=\bar{x}_1$, 
verifies
\begin{equation}\label{eq: expression of y at the boundary point}
 		y(\tau_1+\bar\tau) = \bar x_1 +  \bar\tau^2\frac{\mathcal{A}(\tau_0)}{k\tau_0^2}[f_2,f_1](\bar x_1)
					 + E(\bar \tau), 
\end{equation}
where we recall that, by Proposition \ref{prop:est_trajectory}, $|E(\bar\tau)|\le B \|r\|^2_\infty \bar\tau^3$ and the constant $B$ is independent
of $x_0$ in a compact set.

We deal now with some estimates on $y$ in the interval $(\tau_1+\bar\tau, T]$.
For simplicity of notation, we shall write \[f(x,w) := f_1(x)w_1+f_2(x)w_2.\]
For all $t\ge \tau_1$, 
the Lipschitz continuity of $f_1, f_2$ 
(with common Lipschitz constant $k_f$) and (\ref{eq: expression of y at the boundary point}) imply,
for a suitable constant $\alpha_0>0$ independent of $x_0$, $\bar\tau$ small enough, and $\omega $,
\begin{align*}
 		\big\|y(t+\bar\tau) - \bar x(t)\big\| & \leq \big\|y(\tau_1+\bar\tau) - \bar x_1\big\|  + \int_{\tau_1}^{t} \big\|f(y(s+\bar\tau), w(s+\bar\tau) )- f(\bar x(s), \bar u(s))\big\| \, \dd s \\
		 & = \big\|y(\tau_1+\bar\tau) - \bar x_1\big\|  +\int_{\tau_1}^{t} \big\|f(y(s+\bar\tau), \bar u(s)) - f(\bar x(s), \bar u(s))\big\| \, \dd s \\ 
		 & \leq \alpha_0\,  \bar\tau^2 + k_f \int_{\tau_1}^{t} \big\|y(s+\bar\tau) - \bar x (s)\big\| \, \dd s.
\end{align*}
 By Gronwall's Lemma, we therefore obtain
 \begin{equation}\label{eq: estimate on y and x bar}
 		\big\|y(t+\bar\tau) - \bar x(t)\big\| \le \alpha_0\, \bar\tau^2 e^{k_f(t-\tau_1)} \qquad \text{for all } t\ge \tau_1.
 \end{equation}
Consequently, recalling \eqref{eq:choice_controls} with $\bar\tau$ in place of $\tau(d)$,
\begin{equation}\label{est_y'}
\begin{split}
\int_{\tau_1}^T \big\| \dot{y}(t+\bar\tau)-\dot{\bar{x}}(t)\big\|\, \dd t&=\int_{\tau_1}^T \big\| f(y(t+\bar\tau),\bar u(t))-f(\bar x(t),\bar u(t))\big\|\, \dd t\\
&\le k_f \int_{\tau_1}^T \big\| y(t+\bar\tau) - \bar x(t)\big\|\, \dd t\\
&\le k_f \int_{\tau_1}^T\alpha_0\, \bar\tau^2 e^{k_f(t-\tau_1)}\,\dd t\\
&=  \alpha_0\, \bar\tau^2\,  \big( e^{k_f (T-\tau_1)}-1\big).
\end{split}
\end{equation}
By evaluating $h(y(t))$ for $t\ge \tau_1+\bar\tau$, we obtain
\begin{align*}
 		h(y(t)) &  = h(y(\tau_1 + \bar\tau)) + \int_{\tau_1 + \bar\tau}^{t} \nabla h(y(s)) \cdot f(y(s) , \bar u(s-\bar\tau))  \dd s \\ 
		& =  h(y(\tau_1 + \bar\tau)) + \int_{\tau_1}^{t-\bar\tau}  \nabla h(y(s+\bar\tau)) \cdot f(y(s+\bar\tau) , \bar u(s))  \dd s
\shortintertext{(by Taylor development of the integral term around $\bar{x}(s)$)}
&=h(y(\tau_1 + \bar\tau)) + \int_{\tau_1}^{t-\bar\tau} \nabla h(\bar x(s)) \cdot [f_1(\bar x(s)) \bar u_1(s) + f_2(\bar x(s)) \bar u_2(s)   ]  \dd s\\
&\qquad +   \int_{\tau_1}^{t-\bar\tau} \big(y(s+\bar\tau)-\bar x(s)\big)\cdot \\
&\qquad\qquad \cdot \bigg(\sum_{i=1}^{2} \int_{0}^{1} \Big[D^2 h \big(\bar x(s) + r (y(s+\bar\tau)-\bar x(s))\big)  f_i\big(\bar x(s) + 
                r (y(s+\bar\tau)-\bar x(s))\big) \\ 
&\qquad\qquad \;\;\, + \nabla h \big(\bar x(s) + r (y(s+\bar\tau)-\bar x(s))\big) Df_i \big(\bar x(s) + 
         r (y(s+\bar\tau)-\bar x(s))\big) \, \dd r\Big] \bigg)\, \bar u_i(s) \dd s\\
& \le h(y(\tau_1 + \bar\tau)) +  h(\bar x(t-\bar\tau)) - h(\bar x_1)  +   \int_{\tau_1}^{t-\bar\tau} K \big| y(s+\bar\tau)-\bar x(s)\big|  \dd s
\end{align*}
for a suitable constant $K$ that can be computed using \textbf{(H1)}--\textbf{(H3)}. Since $h(\bar x(t-\bar\tau)) \le d$ and $h(\bar x_1)=0$, recalling
(\ref{eq: estimate on y and x bar}), we obtain
\begin{equation} \label{eq: estimate on y in terms of h proof of NFT} 
h(y(t)) \le h(y(\tau_1+\bar\tau)) + d + Tb_0\bar\tau^2 \qquad \text{ for all } t \ge \tau_1 + \bar\tau,
\end{equation}
for a suitable constant $b_0>0$ depending only on $h,f_1,f_2$.
Therefore, from (\ref{stima88}), evaluated at $\tau_1+\bar\tau$ with the initial time $0$ replaced by
$\tau_1$, we deduce that
\[ 
h(y(t)) \leq -\gamma\alpha\,\bar\tau^2  + d + Tb_0\bar\tau^2 .
\]
Choosing now 
\begin{equation}\label{ch_T}
T\le\frac{\gamma\alpha}{4b_0}
\end{equation}
and 
\begin{equation}\label{ch_tau}
\bar\tau=\tau (d):= \min\left\{\sqrt{\frac{2d}{\gamma\alpha}},1\right\},
\end{equation}
we obtain that (\ref{estimate: on second interval}) is verified also on $(\tau_1+\tau(d), T]$.
 		 		
To conclude the proof, we shall derive the estimates  on the $L^\infty$ and the $W^{1,1}$-distance between 
$y(.)$ and $\bar x(.)$, in terms of the extent $d$ of violation of the state constraint, as well as \eqref{contrest}.

Since $y=\bar x$ on $[0,\tau_1]$, we obtain
\[ \| y-\bar x\|_{L^\infty(0,T)} =  \|  y-  {\bar x}\|_{L^\infty(\tau_1,\tau_1+\tau(d))} + \|  y- {\bar x}\|_{L^\infty(\tau_1+\tau(d),T)} \ . \]
On $(\tau_1, \tau_1+\tau(d)]$, using the bound $k_0$ on the vector fields and $\bar x(\tau_1)=y(\tau_1)$, we obtain by Gronwall lemma
\begin{align*}  \|  y-  {\bar x}\|_{L^\infty(\tau_1,\tau_1+\tau(d))}  \leq 2k_0 \tau(d).  \end{align*}
On $(\tau_1+\tau(d),T]$, we deduce from \eqref{eq: estimate on y and x bar} and \textbf{(H1)} that
\[
\begin{split}
   \|  y(t)-  {\bar x}(t)\|&\le  \| y(t)-\bar x(t-\tau(d))\| + \| \bar x(t-\tau(d))-\bar x(t)\|\\
   &\leq \alpha_0\, \tau(d)^2  e^{k_f(T-\tau_1)} + k_0 \tau(d). 
\end{split}   
\]
We conclude, owing to the choice of $\tau(d)$ in (\ref{ch_tau}), that for some constant $K>0$ depending only on $T, f_1,f_2$ we have
\[ \| y(.)-\bar x(.)\|_{L^\infty(0,T)}  \leq K\sqrt{d}, \]
provided $d\le 1$.

We deal now with the $L^1$ estimate on the derivatives. Recalling \textbf{(H1)} and \eqref{est_y'}, one has
\[
\begin{split}
\big\| \dot{y}(.)-\dot{\bar x}(.)\big\|_{L^1(0,T)}&=\big\|\dot{y}(.)-\dot{\bar x}(.)\big\|_{L^1(\tau_1,\tau_1+\tau(d))}
        +\big\|\dot{y}(.)-\dot{\bar x}(.-\tau(d))\big\|_{L^1(\tau_1+\tau(d),T)}\\
        &\qquad \qquad +\big\|\dot{\bar x}(.-\tau(d))-\dot{\bar x}(.)\big\|_{L^1(\tau_1+\tau(d),T)}\\
&\le 2k_0 \tau(d) +   \alpha_0\, \tau^2(d)\,  \big( e^{k_f (T-\tau_1)}-1\big)+\big\|\dot{\bar x}(.-\tau(d))-\dot{\bar x}(.)\big\|_{L^1(\tau_1+\tau(d),T)}\\
&= o(1)\;\text{ as $d\to 0$.}
\end{split}
\]
This establishes \eqref{W11est}, while \eqref{contrest} follows immediately from \eqref{eq:choice_controls}.
Finally, the statement on the $W^{1,1}$-norm of $y-\bar x$ in case $\dot{\bar x}$ has bounded variation 
can be deduced easily from \eqref{W11est},
by representing each component of $\dot{\bar x}$ as a difference of nondecreasing functions and computing explicitly 
$\|\dot{\bar x}(.)-\dot{\bar x}(.-\tau(d))\|_{L^1(\tau_1+\tau(d),T)}$.
		
The extra assumption \eqref{ch_T} on the length of the time interval can be removed by repeating the above construction 
finitely many times, as in \cite[Proof of Theorem 2.1]{Rampazzo_Vinter_1999}.
\end{proof}
\begin{remark} {\rm 
A construction similar to, e.g., \cite{Rampazzo_Vinter_2000}, that provides a general $W^{1,1}$-estimate of the distance from
$y$ and $\bar x$ under a first order IPC, cannot be applied here. Indeed, the time delay of the reference control
$\bar u$, that we use in \eqref{eq:choice_controls} following \cite{Soner_1986,motta,Rampazzo_Vinter_1999}, 
is essential in order to obtain \eqref{estimate: on second interval}.
Actually, deleting the time delay introduces in the estimate of $h$ along the trajectory $y$ an error that is proportional to 
$\tau(d)$, i.e., to $\sqrt{d}$, and not to $\tau^2(d)$. 
This error cannot be compensated by the gain towards the interior of the constraint that is
obtained using the rotational strategy \eqref{eq:rot_controls}. This gain is computed at \eqref{stima88},
and, at the time $\tau(d)$, turns out to be proportional to $\tau^2(d)$.}
\end{remark}

\section{An application to PDE} \label{sec:applications}
As an application of Theorem \ref{theorem:NFT}, we will prove the following results, which deal with the uniqueness 
of the  (constrained viscosity) solution to a  Hamilton-Jacobi-Bellman equation verified by  the value of an
optimal control problem with state constraints.  
The classical, first order  results in this direction were proved in H.~M. Soner's seminal papers
\cite{Soner_1986} \cite{Soner_1986_II}. We shall limit ourselves to an infinite horizon problem but, with trivial adjustments, one can prove  akin results  for the finite horizon problem.

Let $\Ell : \mathbb{R}^n\times U\to \mathbb{R}$ be a function satisfying:
\begin{itemize}
\item[\textbf{(H$_\Ell$)}] $\Ell$ is bounded and continuous and is uniformly Lipschitz continuous with respect to $x$,
with Lipschitz constant $k_\Ell>0$. 
\end{itemize}
For all $\xi\in C$, define
\[
U_\xi := \{ u:[0,+\infty)\to U \, |\, x_\xi^u(t)\in C \forall t \in [0,+\infty) \}.
\]
Observe that in our setting $U_\xi$ is nonempty, by well known viability results. Define also
\[
V(\xi):= \inf\Big\{ \int_0^{+\infty} e^{-t} \Ell\big(x_\xi^u(t),u(t)\big)\dd t: \ u(.)\in U_\xi\ \Big\}.
\]
Then
\begin{theorem}\label{theorem:cont_val}
Assume \textbf{(H1)}--\textbf{(H6)} and \textbf{(H$_\Ell$)}, and, furthermore, that $\partial C$ is compact. 
Then $V(.)$ is bounded and uniformly continuous on $C$. 
\end{theorem}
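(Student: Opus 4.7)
The boundedness part is trivial: since $|\Ell|\le\|\Ell\|_\infty$, any admissible control yields $|V(\xi)|\le\|\Ell\|_\infty$ for every $\xi\in C$. The substantive content is uniform continuity, which I would derive via a dynamic-programming argument coupled with Theorem \ref{theorem:NFT}, in the spirit of Soner's approach. Fix $\varepsilon>0$ and $\xi_1,\xi_2\in C$ with $|\xi_1-\xi_2|$ small, and select $u_1\in U_{\xi_1}$ such that $\int_0^\infty e^{-t}\Ell(x_1,u_1)\dd t\le V(\xi_1)+\varepsilon$, where $x_1:=x_{\xi_1}^{u_1}$. Choose $T>0$ depending only on $\varepsilon$ and $\|\Ell\|_\infty$ so that $2\|\Ell\|_\infty e^{-T}<\varepsilon$.

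Next let $\bar x:=x_{\xi_2}^{u_1}$ on $[0,T]$. Gronwall gives $\|\bar x(t)-x_1(t)\|\le|\xi_1-\xi_2|\,e^{k_fT}$, hence the maximal violation $d:=\max_{t\in[0,T]}\max\{h(\bar x(t)),0\}$ is bounded by $k_h|\xi_1-\xi_2|\,e^{k_fT}$. Since $h(\bar x(0))=h(\xi_2)\le 0$, Theorem \ref{theorem:NFT} applies, for $|\xi_1-\xi_2|$ small enough, and produces a feasible process $(y,w)$ on $[0,T]$ with $\|y-\bar x\|_{L^\infty(0,T)}\le K\sqrt d$ and $w$ of the form \eqref{eq:choice_controls} for some $\tau(d)\le K\sqrt d$. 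Since $y(T)\in C$, viability (guaranteed by the nonemptiness of $U_\xi$ on $C$) supplies a feasible continuation on $[T,\infty)$, yielding a control in $U_{\xi_2}$ that I still denote by $w$.

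The key one-sided inequality
\[
V(\xi_2)-V(\xi_1)\le\int_0^\infty e^{-t}\bigl[\Ell(y(t),w(t))-\Ell(x_1(t),u_1(t))\bigr]\dd t+\varepsilon
\]
would then be split according to the pieces of \eqref{eq:choice_controls}. The tail $[T,\infty)$ contributes $O(e^{-T})<\varepsilon$ by boundedness of $\Ell$; on $[0,\tau_1]$ the integrand vanishes because $y\equiv x_1$ and $w\equiv u_1$; on $[\tau_1,\tau_1+\tau(d)]$ boundedness of $\Ell$ yields a contribution of order $\tau(d)=O(\sqrt d)$. On $[\tau_1+\tau(d),T]$ the Lipschitz property of $\Ell$ in $x$ gives
\[
|\Ell(y(t),w(t))-\Ell(x_1(t-\tau(d)),u_1(t-\tau(d)))|\le k_\Ell\|y(t)-x_1(t-\tau(d))\|,
\]
and by \eqref{eq: estimate on y and x bar} together with Gronwall,
\[
\|y(t)-x_1(t-\tau(d))\|\le\|y-\bar x(\cdot-\tau(d))\|_{L^\infty}+\|\bar x-x_1\|_{L^\infty}=O(\tau(d)^2+|\xi_1-\xi_2|).
\]
A change of variables $s=t-\tau(d)$ then reduces the remainder $\int_{\tau_1+\tau(d)}^T e^{-t}\Ell(x_1(t-\tau(d)),u_1(t-\tau(d)))\dd t$ to $e^{-\tau(d)}\int_{\tau_1}^{T-\tau(d)} e^{-s}\Ell(x_1,u_1)\dd s$, which differs from $\int_{\tau_1}^T e^{-s}\Ell(x_1,u_1)\dd s$ by $O(\tau(d))=O(\sqrt d)$, via the factors $1-e^{-\tau(d)}$ and the missing slice $[T-\tau(d),T]$. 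Assembling all terms and swapping the roles of $\xi_1$ and $\xi_2$, then letting $\varepsilon\to 0$, yields $|V(\xi_1)-V(\xi_2)|\le C\sqrt{|\xi_1-\xi_2|}$ for $|\xi_1-\xi_2|$ sufficiently small, which is uniform continuity.

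Compactness of $\partial C$ is invoked to make the constants $K,k_f,k_h,k_\Ell,\tau_{x,r},\bar\omega_{x,r}$ that enter the NFT construction uniform in a neighborhood of $\partial C$; far from $\partial C$ the first-order IPC is vacuously satisfied on the relevant time window and a direct Gronwall estimate already gives local Lipschitz continuity of $V$. The main obstacle will be the control mismatch between $w$ and $u_1$ on $[\tau_1+\tau(d),T]$, since $\Ell$ is only Lipschitz in $x$ and not in $u$, so one cannot pointwise compare $\Ell(\cdot,w(t))$ with $\Ell(\cdot,u_1(t))$. The resolution is precisely the change of variables described above, which exploits the fact that in \eqref{eq:choice_controls} the control $w$ is a genuine time-translate of $u_1$ on that long sub-interval: the built-in delay is thus not merely a technical device for constructing $y$ inside $C$, but the very mechanism that converts the $O(\sqrt d)$ state proximity provided by Theorem \ref{theorem:NFT} into an $O(\sqrt{|\xi_1-\xi_2|})$ modulus of continuity for $V$.
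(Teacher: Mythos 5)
Your proposal is correct in substance and follows the same overall strategy as the paper (pick an $\varepsilon$-optimal control for $\xi_1$, run it from $\xi_2$, observe the violation $d$ is $O(|\xi_1-\xi_2|)$, invoke Theorem \ref{theorem:NFT}, and compare costs via the Lipschitz dependence of $\Ell$ on $x$), but the key step is handled genuinely differently. The paper estimates the control mismatch abstractly through \eqref{contrest} and must then absorb terms of the form $\int_0^{t^\ast}e^{-t}\big(\Ell(x,u^\ast)-\Ell(x,u)\big)\dd t$, whose smallness rests on the $L^1$-continuity of translations applied to the particular near-optimal control; since that rate is not uniform in the control, the paper only concludes (pointwise) continuity from this computation and then imports uniform continuity in the interior from Theorems 3.1 and 4.1 of \cite{motta}. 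You instead exploit the explicit delayed structure of the control in \eqref{eq:choice_controls} and perform the change of variables $s=t-\tau(d)$ in the discounted integral, so that on the long subinterval the two running costs are evaluated at the \emph{same} control value and only Lipschitz continuity in $x$ and boundedness of $\Ell$ are needed; this yields a modulus that is uniform in the reference control and makes the appeal to \cite{motta} unnecessary. The price is that you use the form of the control coming from the \emph{construction} in Section \ref{subs:completion} rather than from the statement of Theorem \ref{theorem:NFT} (note that in case (a) of that proof the middle window carries a constant inward control rather than a rotational one, but the delayed-reference structure persists, so your argument still applies).

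Two points need correction, neither fatal. First, on $[0,\tau_1]$ the integrand does not vanish: there $y\equiv\bar x=x_{\xi_2}^{u_1}$, not $x_1=x_{\xi_1}^{u_1}$; by Gronwall and the Lipschitz property of $\Ell$ in $x$ this piece contributes an extra $O(|\xi_1-\xi_2|)$ term, which is harmless. Second, you cannot ``let $\varepsilon\to 0$'' to obtain a H\"older bound $|V(\xi_1)-V(\xi_2)|\le C\sqrt{|\xi_1-\xi_2|}$: the horizon $T=T(\varepsilon)$ enters the estimate through $d\le k_h|\xi_1-\xi_2|e^{k_fT}$ and through the constants of Theorem \ref{theorem:NFT}, so the constant blows up as $\varepsilon\to0$. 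What your computation actually gives is: for every $\varepsilon>0$ there is $\delta(\varepsilon)>0$, independent of $\xi_1,\xi_2$, such that $|\xi_1-\xi_2|<\delta(\varepsilon)$ implies $|V(\xi_1)-V(\xi_2)|\le C\varepsilon$. That is exactly the uniform continuity asserted in Theorem \ref{theorem:cont_val}, so the proof stands once the H\"older claim is dropped.
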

As an immediate consequence of Corollary IV.5.9 in \cite{BCD} and of Theorem \ref{theorem:cont_val}, one obtains
\begin{theorem}\label{th:uniqHJ}
Under the assumptions of Theorem \ref{theorem:cont_val}, the value function $V$ is the unique bounded and 
uniformly continuous
\textrm{constrained viscosity solution} of the Hamilton--Jacobi equation
\begin{equation}\label{HJ_eq}
V+H(x,DV)=0
\end{equation}
in the whole of $C$, where $H(x,p)=	\sup_{u=(u_1,u_2)\in U}\big\{-\big(f_1(x)u_1+f_2(x)u_2\big)\cdot p+\Ell (x,u)\big\}$.
\end{theorem}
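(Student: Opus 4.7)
The proof of Theorem \ref{th:uniqHJ} is presented as an immediate consequence of the continuity result Theorem \ref{theorem:cont_val} and Corollary IV.5.9 of \cite{BCD}, so the plan is essentially to assemble these ingredients; the real work lies upstream in Theorem \ref{theorem:cont_val}. My strategy has two steps: check that $V$ is a constrained viscosity solution of \eqref{HJ_eq}, then apply the abstract uniqueness result.

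Boundedness of $V$ is immediate from assumption \textbf{(H$_\Ell$)} (the Lagrangian is bounded and the discount factor is integrable), and uniform continuity on $C$ is precisely the content of Theorem \ref{theorem:cont_val}. For the viscosity-solution part, the Bellman dynamic programming principle
\[
V(\xi)=\inf_{u\in U_\xi}\left\{\int_0^t e^{-s}\Ell(x_\xi^u(s),u(s))\,\dd s+e^{-t}V(x_\xi^u(t))\right\},\quad \xi\in C,\ t>0,
\]
together with the continuity of $V$, yields the supersolution property in $\mathrm{int}(C)$ by the classical argument. The subsolution inequality on all of $C$ is the delicate point: at a point $\xi\in\partial C\cap\mathbb{S}$ where the first-order IPC fails, one tests the inequality by perturbing a reference trajectory --- possibly not contained in $C$ --- into a feasible one. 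Theorem \ref{theorem:NFT} provides such a perturbation with $L^\infty$ error of order $\sqrt{d}$; combined with the uniform continuity of $V$, this suffices to pass to the limit in the DPP and close the subsolution inequality.

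With $V$ established as a bounded uniformly continuous constrained viscosity solution of \eqref{HJ_eq}, uniqueness follows from Corollary IV.5.9 of \cite{BCD}, whose structural hypotheses (continuity and convexity of $H$ in $p$, Lipschitz-in-$x$ uniformly on bounded sets of $p$, $\mathcal{C}^2$-smoothness and compactness of $\partial C$) are direct consequences of \textbf{(H1)}--\textbf{(H3)} and \textbf{(H$_\Ell$)}. The only non-routine step of the plan is the subsolution property at singular boundary points; this is exactly where the $\sqrt{d}$-estimate \eqref{eq:nonlinest} substitutes for the linear-in-$d$ estimate \eqref{linW11} available under the classical IPC, and it is the one part of the argument that genuinely relies on the new neighboring-feasible-trajectory construction of this paper.
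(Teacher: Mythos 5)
Your outer skeleton---$V$ bounded by \textbf{(H$_\Ell$)}, uniformly continuous on $C$ by Theorem \ref{theorem:cont_val}, and then uniqueness read off from Corollary IV.5.9 of \cite{BCD}---is exactly how the paper disposes of this theorem (it is stated as an immediate consequence of those two facts, with no further argument). However, the middle of your proposal contains a genuine error: you have the definition of \emph{constrained viscosity solution} backwards. By Definition IV.5.6 of \cite{BCD} (see also the paper's footnote), the function must be a viscosity \emph{subsolution} in the interior of $C$ and a viscosity \emph{supersolution} on all of $C$; at boundary points only the supersolution inequality is required. You instead assert the supersolution property in $\mathrm{int}(C)$ and set out to prove the \emph{subsolution} inequality up to $\partial C$, in particular at points of $\partial C\cap\mathbb{S}$. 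That inequality is in general false for state-constrained value functions---its failure at the boundary is precisely the reason Soner's one-sided notion exists---so this step of your plan would not close.

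The misidentification also misplaces the role of Theorem \ref{theorem:NFT}. The $\sqrt{d}$-estimates \eqref{eq:nonlinest} and \eqref{contrest} are used only upstream, in the proof of Theorem \ref{theorem:cont_val}, to compare the values at two nearby initial points by turning an infeasible copy of a near-optimal trajectory into a feasible one; they play no part in verifying the sub/supersolution inequalities. Once $V$ is known to be bounded and continuous, the fact that it is a constrained viscosity solution follows from the dynamic programming principle by the standard arguments (subsolution in the interior because constant controls are admissible for short times from interior points; supersolution everywhere because every competitor in the DPP uses admissible dynamics), and this, together with the comparison/uniqueness statement, is exactly what Corollary IV.5.9 of \cite{BCD} packages. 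So the correct division of labor is: Theorem \ref{theorem:NFT} $\Rightarrow$ Theorem \ref{theorem:cont_val} (continuity up to $\partial C$) $\Rightarrow$ membership in the uniqueness class of \cite{BCD}; the PDE inequalities themselves require no new construction.
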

For the meaning of \textit{constrained viscosity solution} on $C$ for \eqref{HJ_eq}, we make reference to 
\cite[Definition IV.5.6]{BCD}\footnote{ Let us just only mention to the crucial fact that while  the notion of {\it viscosity solution} is given through the simultaneous validity of the supersolution and subsolution relations at each point of the domain, in the case of the {\it constrained viscosity solution}  at the points in the domain's boundary the function is only required to be a supersolutiom.}.

\begin{proof}[Proof of Theorem \ref{theorem:cont_val}.] 
Since the integrand $\mathscr{\Ell}$ is bounded, say by a constant $A>0$, the value function is obviously bounded 
(by $A$ itself). 

Fix now $r>0$ and let $x_1,x_2\in C$ be such that $|x_1-x_2|<r$. Let furthermore $\delta >0$, 
together with $t^\ast$ such that $A e^{-t}<\frac{\delta}{2}$. Then there exists a control $u\in U_{x_1}$ such that
\begin{equation}\label{est_val}
\int_0^{t^\ast} e^{-t}\Ell \big(x_{x_1}^u (t),u(t)\big) \dd t + A e^{-t^\ast} < V(x_1)+ \delta.
\end{equation}
Consider the trajectory $x_{x_2}^u(.)$ and observe that, for all $t\in [0,t^\ast]$ one has
\[
h(x_{x_2}^u(t))\le h(x_{x_1}^u(t))+k_h |x_{x_2}^u(t))-x_{x_1}^u(t))\le |x_2-x_1| \big(e^{k_f t^\ast} -1\big).
\]
By Theorem \ref{theorem:NFT}, there exist a control $u^\ast\in U_{x_2}$ and a constant $K$ such that
\[
\big\| x_{x_2}^{u^\ast}(.)-x_{x_2}^u(.)\big\|_{L^\infty(0,t^\ast)}\le K \sqrt{r}.
\]
Then, by the Dynamic Programming Principle (see, e.g., Proposition IV.5.5 in \cite{BCD})
\begin{align*}
V(x_2)&\le \int_0^{t^\ast} e^{-t} \Ell \big(  x_{x_2}^{u^\ast}(t),u^\ast(t)\big)\dd t + 
                           e^{-t^\ast} V\big( x_{x_2}^{u^\ast}(t^\ast)\big)\\
&\le \int_0^{t^\ast} e^{-t} \Ell \big(  x_{x_2}^{u^\ast}(t),u^\ast(t)\big)\dd t + A e^{-t^\ast}\\
&\le \int_0^{t^\ast} e^{-t} \Ell \big(  x_{x_2}^{u}(t),u^\ast(t)\big)\dd t + 
k_\Ell K(1-e^{-t^\ast})  \sqrt{r}\\
&\qquad   +\int_0^{t^\ast}\!\! e^{-t^\ast}\Big(\Ell \big(  x_{x_2}^{u}(t),u^\ast(t)\big)-
\Ell \big(  x_{x_2}^{u}(t),u(t)\big)\Big)\dd t +A e^{-t^\ast}\\
&\le  \int_0^{t^\ast} e^{-t} \Ell \big(  x_{x_1}^{u}(t),u(t)\big)\dd t + A e^{-t^\ast}+k_\Ell  (1-e^{-t^\ast})r+
k_\Ell K(1-e^{-t^\ast})  \sqrt{r}\\
&\quad +\!\! \int_0^{t^\ast}\!\! e^{-t}\Big(\Ell \big(  x_{x_1}^{u}(t),u^\ast(t)\big)-
\Ell \big(  x_{x_1}^{u}(t),u(t)\big)\Big)\dd t
+\!\!\int_0^{t^\ast}\!\! e^{-t}\Big(\Ell \big(  x_{x_2}^{u}(t),u^\ast(t)\big)-
\Ell \big(  x_{x_2}^{u}(t),u(t)\big)\Big)\dd t \\
&\le V(x_1) + \delta + o(1)
\end{align*}
as $r\to 0$, where in the last inequality we have used \eqref{est_val} and \eqref{contrest}.
Since $\delta$ is arbitrary and $x_1$ and $x_2$ can be interchanged, this shows that $V$ is continuous
in the whole of $C$. Now, owing to Theorems 3.1 and 4.1 in \cite{motta}, that hold under the assumption that 
$\partial C$ is compact, the value function is indeed uniformly continuous in the interior of $C$.
The proof is concluded.
\end{proof}
\section{Appendix: Proof of Proposition \ref{inv_S}}
\begin{proof}
Let $x$ be a chart on some open set $O$. For any smooth  $1$-form $\omega = \sum_{j=1}^n \omega_j \dd x^j$  and any smooth vector field $F=\sum_{j=1}^n F^j \frac{\partial}{\partial x^j}$, the quantity 	$\Phi:O\to\R$, $\Phi(x):=\omega(x)\cdot F(x)$ is invariant under the smooth local change of coordinates $\tilde{x}=\tilde{x}(x)$.
Namely, if for every $j=1,\dots,n$,  $\tilde{\omega}_j(\tilde{x})=\sum_{r=1}^n \frac{\partial x^r}{\partial \tilde{x}^j}\omega_r(x(\tilde{x}))$
and $\tilde{F}^j(\tilde{x})=\sum_{\ell=1}^n\frac{\partial \tilde{x}^j}{\partial x^\ell} F^\ell (x(\tilde{x}))$, one has 
\[
\tilde\Phi(\tilde x(x)) := \tilde\omega(\tilde x(x))\cdot \tilde F(\tilde x(x)) = \omega(x)\cdot F(x)
=\Phi(x).
\]
Let now $h:\R^n\to\R$ be of class ${\mathcal C}^1$ and let  the vector fields $f_i:\R^n\to\R^n$, $i=1,2$
		 be of class ${\mathcal C}^2$. 
		Fix $w=(w^1,w^2)\in\R^2$ and consider the function $\Phi_w :O\to\R$ defined  as
		\[
	\,\,\,\,\, \Phi_w (y) := \sum_{i=1}^2 w^i \big( \nabla h (y) \cdot f_i(y)\big) \quad\forall y\in O.
		\]
		In view of the above consideration, for each smooth curve $x:[0,T]\to O$, the function
		\[
		\Phi_w\circ x :[0,T]\to\R,
		\]
		is independent of the choice of coordinates in $\R^n$. In particular, the scalar function
$\frac{\dd }{\dd t} \big( \Phi_w\circ x(t)\big)_{|t=0}$ is independent of coordinates. 
Let now $u_1, u_2:[0,T]\to\R^2$ be continuous and fix an initial point $x_0\in O$. For $\tau\in ]0,T]$ 
sufficiently small consider the unique local solution $x:[0,\tau]\to\R^n$ of the Cauchy problem
\[\dot{x}=\displaystyle{\sum_{i=1}^2 f_i (x)u^i}\,\,\,\quad
			x(0)=x_0
\]
and set $(w^1,w^2):= ( u^1(0),u^2(0))$. Then
\begin{align*}
\frac{\dd }{\dd t}^+ \big( \Phi_w\circ x(t)\big)_{|t=0}&=\sum_{j=1}^2 u^j(0)\, 
\nabla \big( \nabla h (x(t))\cdot f_j(x(t))\Big)_{|t=0}\,\Big( \sum_{i=1}^2 f_i(x(t)) u^i(t)\Big)_{|t=0}\\
			&= \sum_{i,j=1}^2 \left( \sum_{k = 1}^n \frac{\partial}{\partial x^k} 
\left( \sum_{\ell = 1}^n\frac{\partial h}{\partial x^\ell} f^\ell_j\right)(x_0)f_i^k(x_0)\right) w^i w^j\\
			&= \sum_{i,j=1}^2\left(\sum_{k,\ell=1}^n
\left(\frac{\partial^2 h}{\partial x^k\partial x^\ell}f_j^\ell f_i^k
+
\frac{\partial h}{\partial x^\ell}\frac{\partial f_j^\ell}{\partial x^k} f_i^k\right)\right)(x_0)\,  w^i w^j\\
		&= \sum_{i,j=1}^2\big(f_j D^2 h f_i + \nabla h \cdot D f_j f_i\big) (x_0) \, w^i w^j\\
			&= \sum_{i,j=1}^2\left(f_j D^2 h f_i + \frac12 \nabla h \cdot \big( D f_j f_i+ D f_i f_j\big)\right) (x_0) \, w^i w^j\\
			&= \sum_{i,j=1}^2 S_{ij}(x_0) w^i w^j.
\end{align*}
Similarly (and with an obvious meaning of the notation) one obtains
	\[	\frac{\dd }{\dd t}^+ \Big( \tilde\Phi_w\circ \tilde x(t)\big)_{|t=0}= \sum_{i,j=1}^2 \tilde S_{ij}(x_0) w^i w^j.\]
	Therefore 
	\[
	 \sum_{i,j=1}^2 S_{ij}(x_0) w^i w^j=	\frac{\dd }{\dd t}^+ \Big( \Phi_w\circ x(t)\big)_{|t=0} = 	\frac{\dd }{\dd t}^+ \Big( \tilde\Phi_w\circ \tilde x(t)\big)_{|t=0}= \sum_{i,j=1}^2 \tilde S_{ij}(\tilde x(x_0)) w^i w^j.
	\]
		Since  this holds for any pair $(w^1,w^2)\in\R^2$ (and the matrices   $S,\tilde S$ are symmetric)  this implies $\tilde S_{ij}(x_0)= S_{ij}(\tilde x(x_0))$ for all $i,j=1,2$ (and all $x_0\in O$.)
	\end{proof}
\noindent\textit{Acknowledgment.} Nathalie T. Khalil started working on this problem as part of her PhD thesis, 
during a visit to the department of mathematics at the University of Padova, that was funded by 
Universit\'e de Bretagne Occidentale and by Universit\'e de Bretagne Loire, France.

\end{document}